\documentclass[preprint,12pt]{elsarticle}
\usepackage{amsmath}
\usepackage{amssymb}
\usepackage{tikz}
\usepackage{float}
\usetikzlibrary{arrows.meta}
\usepackage{babel}
\usepackage{hyperref}
\usepackage{epsfig}
\usepackage{graphicx}
\usepackage[cp1251]{inputenc}
\usepackage{amssymb}
\usepackage{amsthm}
\usepackage{cmap}

\usepackage{amsmath}

\biboptions{sort&compress}

\newcommand{\sysn}{\left\{\begin{array}{rcl}}
\newcommand{\sysk}{\end{array}\right.}

\newtheorem{theorem}{Theorem}[section]

\theoremstyle{example}

\newtheorem{proposition}[theorem]{Proposition}
\theoremstyle{definition}
\newtheorem{definition}[theorem]{Definition}

\newtheorem{corollary}[theorem]{Corollary}

\journal{...}

\begin{document}

\title{Some function applications of weak $\lambda$-spaces}

\author{Alexander V. Osipov}

\address{Krasovskii Institute of Mathematics and Mechanics, \\ Ural Federal
 University, Yekaterinburg, Russia}

\ead{OAB@list.ru}

\begin{abstract} In this paper it is proved that there exists (in ZFC) a separable pseudocompact weak $\lambda$-space $X$ which is not $\Delta_1$-space. It follows that there exists a space $X$ such that  $B_1(X,[0,1])$ is Choquet, while $B_1(X)$ is not Choquet.
Also it is proved that there exists a $\gamma$-set $X$  which is not weak $\lambda$-set. It follows that there exists  a zero-dimensional separable metrizable space $X$  such that $B_1(X)$ is Baire, while $B_1(X,[0,1])$ is not Choquet. These results answers the previously posed questions.
\end{abstract}

\begin{keyword}  weak $\lambda$-space  \sep Baire-one function \sep Choquet property \sep Baire property \sep $B_1$-embedded \sep $B^*_1$-embedded \sep $\gamma$-set  \sep perfectly meager

\MSC[2010] 54C35  \sep 54E52  \sep 54H05   \sep 54C45

\end{keyword}

\maketitle 



\section{Definitions and notation}

Let us recall that a cover $\mathcal{U}$ of a set $X$ is called

$\bullet$ an $\omega$-cover if each finite set $F\subseteq X$ is contained in some set $U\in \mathcal{U}$;

$\bullet$ a $\gamma$-cover if for any $x\in X$ the set $\{U\in \mathcal{U}: x\not\in U\}$ is finite.

Recall that $X\subseteq 2^{\omega}$ is

$\bullet$ a $\gamma$-set if each open $\omega$-cover $\mathcal{U}$ of $X$ contains a $\gamma$-subcover of $X$ \cite{GN}.

$\bullet$ a {\it  $\lambda$-set} if every countable subset of $X$ is $G_{\delta}$ (\S 40, \cite{Kur}).

$\bullet$ a {\it weak $\lambda$-set}  if for every pair of countable disjoint sets $Y,Z\subseteq X$ there exists a $G_{\delta}$-$F_{\sigma}$ set $H$ of $X$ such that $Y\subseteq H$ and $Z\subseteq X\setminus H$ (Def. 4.1 in \cite{RRS}).

The concepts of $\gamma$-set, $\lambda$-set and weak $\lambda$-set can be considered in arbitrary topological spaces.

A topological space $X$ is a {\it $\lambda$-space} if every countable subset of $X$ is $G_{\delta}$. A topological space $X$ is a {\it weak $\lambda$-space}  if for every pair of countable disjoint sets $Y,Z\subseteq X$ there exists a $G_{\delta}$-$F_{\sigma}$ set $H$ of $X$ such that $Y\subseteq H$ and $Z\subseteq X\setminus H$.

A topological space $X$ is called a $\gamma$-space if each open $\omega$-cover $\mathcal{U}$ of $X$ contains a $\gamma$-subcover of $X$. $\gamma$-Spaces were introduced by Gerlits and Nagy in \cite{GN}.

A space $X$ is {\it Fr\'{e}chet-Urysohn} if, for any $A\subseteq X$ and any $x\in A$, there exists
a sequence $\{a_n: n\in \omega\}\subseteq A$ that converges to $x$. In \cite{GN}, it is proved that $C_p(X)$ is Fr\'{e}chet-Urysohn if and only if $X$ is a $\gamma$-space.

A family $\{A_{\alpha}: \alpha\in \kappa\}$ of subsets of a set $X$ is said to be {\it point-finite} if for every $x\in X$, $\{\alpha\in \kappa: x\in A_{\alpha}\}$ is finite.

A space $X$ has the {\it $\Delta_1$-property} \cite{KKL} if any disjoint sequence $\{A_n: n\in \omega\}$ of
countable subsets of $X$ has a point-finite open expansion, i.e., there exists a point-finite family $\{U_n: n\in \omega\}$ of open subsets of $X$ such that
$A_n\subset U_n$ for every $n\in \omega.$  The spaces with the $\Delta_1$-property are also called $\Delta_1$-spaces. It is clear that every $\lambda$-space is a $\Delta_1$-space.
If $X$ is a $\Delta_1$-space with a countable pseudocharacter then it is a $\lambda$-space (Theorem 2.19 in \cite{KKL}).

A space $X$ is called {\it $\kappa$-Fr\'{e}chet-Urysohn} if, for any open set $U\subset X$ and any point $x\in \overline{U}$, there exists a sequence $\{x_n: n\in \omega\}\subset U$ that converges to~$x$. Clearly, every Fr\'{e}chet-Urysohn space is $\kappa$-Fr\'{e}chet-Urysohn.

\medskip
A family $\{A_{\alpha}: \alpha\in \kappa\}$ of subsets of a space $X$ is said to be {\it strongly point-finite} if for every $\alpha\in \kappa$, there exists an open
set $U_{\alpha}$ of $X$ such that $A_{\alpha}\subset U_{\alpha}$ and $\{U_{\alpha}: \alpha\in \kappa\}$ is point-finite.

A space $X$ is said to have {\it property $(\kappa)$} if every pairwise
disjoint sequence of finite subsets of $X$ has a strongly point-finite subsequence.

\medskip

 In \cite{Sakai}, Sakai characterized $\kappa$-Fr\'{e}chet-Urysohn property in $C_p(X)$ showing that {\it $C_p(X)$ is $\kappa$-Fr\'{e}chet-Urysohn if and only if $X$ has the property $(\kappa)$}.

\medskip

It was also shown in \cite{KKL} that $C_p(X)$ is $\kappa$-Fr\'{e}chet-Urysohn whenever $X$ has the $\Delta_1$-property and,
under Martin's Axiom, even the Fr\'{e}chet-Urysohn property of $C_p(X)$ does not
imply that $X$ is a $\Delta_1$-space.

\medskip

 A space is {\it meager} (or {\it of the first Baire category}) if it
can be written as a countable union of closed sets with empty
interior. A topological space $X$ is {\it Baire} if the Baire
Category Theorem holds for $X$, i.e., the intersection of any
sequence of open dense subsets of $X$ is dense in $X$. Clearly, if
$X$ is a Baire space, then $X$ is not meager.

\medskip

Recall that a subset $A$ of a space $X$ is called a {\it zero-set} if $A=f^{-1}(0)$ for some $f\in C(X)=C(X,\mathbb{R})$. A subset $A$ of a space $X$ is called a {\it cozero set} if $X\setminus A$ is a zero-set.

A subset $M$ of a space $X$ is called  a {\it $Coz_{\delta}$-set} in $X$ if $M=\bigcap \{W_i: i\in \omega\}$ where $W_i$ is cozero set in $X$ for each $i\in \omega$. A subset $T$ of a space $X$ is a {\it $Zer_{\sigma}$-set} if $X\setminus T$ is a $Coz_{\delta}$-set i.e. $T=\bigcup \{F_i: i\in \omega\}$ where $F_i$ is zero set in $X$ for each $i\in \omega$.

\section{Main results}

\begin{proposition} (Theorem 4.5 in \cite{OS1}) There exists (in $ZFC$) a separable pseudocompact space $X$ with the property $(\kappa)$ that is not $\Delta_1$-space.
\end{proposition}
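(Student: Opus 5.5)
A natural candidate is a Mrówka--Isbell type space $\Psi(\mathcal{A})$, built from a maximal almost disjoint family $\mathcal{A}$ on $\omega$. Such a space is separable, since $\omega$ is dense, and it is pseudocompact exactly when $\mathcal{A}$ is maximal. It also has countable pseudocharacter at every point. By Theorem 2.19 in \cite{KKL}, if $\Psi(\mathcal{A})$ were a $\Delta_1$-space it would be a $\lambda$-space. So to show it is not $\Delta_1$, I only need a countable subset that is not $G_\delta$.

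\textbf{Step 1 (not $\Delta_1$).} Take $Y=\omega$, which is countable and dense. Suppose $\omega=\bigcap_n U_n$ with each $U_n$ open. Each $U_n$ contains $\omega$, so its complement $\mathcal{A}\setminus U_n$ is a closed discrete subset of $\mathcal{A}$ missing $\omega$. Hence $\mathcal{A}=\bigcup_n(\mathcal{A}\setminus U_n)$ would be a countable union of closed sets whose traces lie in $\mathcal{A}$. Pseudocompactness together with maximality forces each closed discrete $\mathcal{A}\setminus U_n$ to be finite. Indeed, an infinite closed discrete subset of $\mathcal{A}$ would give a discrete infinite family of open sets $\{a\}\cup a$, after shrinking the $a$'s to make them pairwise disjoint, which contradicts pseudocompactness. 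Then $\mathcal{A}$ would be countable. But a MAD family is uncountable, so $\omega$ is not $G_\delta$. (Alternatively, any uncountable almost disjoint family that is not ``$\sigma$-disjointable'' works.)

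\textbf{Step 2 (property $(\kappa)$).} This is the main obstacle, because arbitrary MAD families need not give property $(\kappa)$. Take a pairwise disjoint sequence of finite sets $F_k\subset\Psi(\mathcal{A})$. Each $F_k$ splits into a part in $\omega$ and a part in $\mathcal{A}$. The natural open expansions are $U_k=F_k^{\omega}\cup\bigcup_{a\in F_k^{\mathcal{A}}}(\{a\}\cup (a\setminus n_k))$ for suitable $n_k$. For the points of $\mathcal{A}$, point-finiteness is automatic after passing to a subsequence, since each $a$ lies in only one $F_k$. It remains to ensure that no point $m\in\omega$ lies in infinitely many $U_k$. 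There are two cases. If infinitely many $F_k$ meet $\omega$ only, or the points of $F_k^{\omega}$ eventually escape, I choose the $n_k$ increasing so that $m\in a\setminus n_k$ can hold only for finitely many $k$. The difficulty is making the whole family point-finite simultaneously. I would first try to thin the sequence so that the finite sets $\bigcup F_k^{\mathcal{A}}$ are ``separated,'' choosing $n_k$ larger than $\max\bigcup_{j<k}F_j^{\omega}$ and larger than the pairwise intersections of the $a$'s used so far. Then each $m$ belongs to $a\setminus n_k$ only for the $k$ with $n_k\le m$, which are finitely many. This gives point-finiteness without choosing any particular MAD family, so property $(\kappa)$ should hold for every $\Psi(\mathcal{A})$.

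\textbf{Step 3.} Combine the two steps. $\Psi(\mathcal{A})$ for a MAD family $\mathcal{A}$, which exists in ZFC, is separable and pseudocompact, has property $(\kappa)$ by Step 2, and is not $\Delta_1$ by Step 1 with Theorem 2.19 of \cite{KKL}. If Step 2 fails for general $\mathcal{A}$, I would fall back on citing the construction of Theorem 4.5 in \cite{OS1} directly, as the proposition itself does.
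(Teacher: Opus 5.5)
\textbf{The proposal has a fatal gap: no Mr\'owka--Isbell space $\Psi(\mathcal{A})$ can be a counterexample, and Step 1 rests on a false claim.}

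In $\Psi(\mathcal{A})$ every point of $\omega$ is isolated. So $\omega$ is open, and hence trivially $G_\delta$ (take $U_n=\omega$ for all $n$).

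Your assertion that pseudocompactness forces every closed discrete subset of $\mathcal{A}$ to be finite is also wrong. The set $\mathcal{A}$ itself is an infinite (indeed uncountable) closed discrete subset of the pseudocompact space $\Psi(\mathcal{A})$. Pseudocompactness only rules out infinite \emph{locally finite} families of nonempty open sets. The pairwise disjoint sets $\{a\}\cup(a\setminus n_a)$ are not locally finite: pick $m_a\in a\setminus n_a$ for infinitely many $a$. By maximality some $b\in\mathcal{A}$ meets $\{m_a\}$ in an infinite set, so every neighbourhood of $b$ meets infinitely many of these sets. This is exactly why maximality yields pseudocompactness.

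The problem is worse than one faulty step, because your own Step 2 refutes Step 1. That argument, with $n_k=k$, never uses finiteness of the $F_k$. Take any disjoint sequence of \emph{countable} sets $A_k\subseteq\Psi(\mathcal{A})$ and put $U_k=(A_k\cap\omega)\cup\bigcup_{a\in A_k\cap\mathcal{A}}(\{a\}\cup(a\setminus k))$. This is a point-finite open expansion. A point $a\in\mathcal{A}$ lies only in the $U_k$ with $a\in A_k$. A point $m\in\omega$ lies in $U_k$ only if $m\in A_k$ or $k\le m$.

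So $\Psi(\mathcal{A})$ is a $\Delta_1$-space, in fact a $\lambda$-space, for every almost disjoint family, maximal or not. This also follows because $\Psi(\mathcal{A})$ is scattered. By Theorem 2.9 of \cite{KKL}, a pseudocompact space is $\Delta_1$ if and only if all its countable subsets are scattered.

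That theorem tells you what the example must contain: a countable non-scattered subset, while the space still has property $(\kappa)$. The construction behind the proposition (Theorem 4.5 of \cite{OS1}, recalled in the proof of Theorem 2.2) is $X=S\cup P\subseteq I^{\mathfrak{c}}$. Here $S$ is a countable dense submaximal subspace, and $P$ is a disjoint dense pseudocompact subspace whose countable subsets are closed and $C^*$-embedded. Failure of $\Delta_1$ is then immediate from Theorem 2.9 of \cite{KKL}, since $S$ is dense in $I^{\mathfrak{c}}$ and hence crowded. The substantive part is verifying $(\kappa)$ for this non-scattered space, which your proposal never addresses. Falling back on citing \cite{OS1} is not a proof.
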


This space $X$ has another important property: $X$ is a weak $\lambda$-space.

\begin{theorem}\label{th2} There exists a weak $\lambda$-space that is not $\Delta_1$-space.

\end{theorem}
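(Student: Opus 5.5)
Throughout, $X$ denotes the space given by the Proposition (Theorem 4.5 in \cite{OS1}). Since that space is already known not to be a $\Delta_1$-space, the plan is to take exactly this $X$ and verify directly that it is a weak $\lambda$-space. Theorem \ref{th2} then follows at once. The verification is done from the concrete construction in \cite{OS1}, which I am recalling rather than reproducing. As I recall it, $X$ is built as $X=D\cup K$, where:
\begin{itemize}
\item $D$ is a countable dense set of isolated points (this is what gives separability);
\item $K$ is a subset of a remainder such as $\omega^{*}=\beta\omega\setminus\omega$, chosen so that $X$ is pseudocompact and the $\Delta_1$-property fails.
\end{itemize}
The first step is to fix this description and record two facts about $X$. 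First, every subset of $D$ is open, and every countable subset of $D$ is a countable union of closed singletons. Second, $K$ carries the subspace topology of a compact $F$-space.

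Now let $Y,Z\subseteq X$ be countable and disjoint. I split each into its isolated part, $Y\cap D$ and $Z\cap D$, and its remainder part, $Y\cap K$ and $Z\cap K$.

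The isolated part is easy. Any countable $A\subseteq D$ is open, hence $G_\delta$, and it is $F_\sigma$ as a countable union of singletons. So $A$ is $G_\delta$-$F_\sigma$, and so is $D$ itself. Since finite unions and intersections of $G_\delta$-$F_\sigma$ sets are again $G_\delta$-$F_\sigma$, it suffices to find a $G_\delta$-$F_\sigma$ set $H_0$ with $Y\cap K\subseteq H_0$ and $H_0\cap (Z\cap K)=\emptyset$. We can then put
\[
H=(H_0\setminus (Z\cap D))\cup (Y\cap D).
\]
This $H$ is $G_\delta$-$F_\sigma$, contains $Y$, and misses $Z$.

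For the remainder part, the plan is to find a continuous $f\colon X\to[0,1]$ with $f=1$ on $Y\cap K$ and $f=0$ on $Z\cap K$. Then $H_0=f^{-1}((1/2,1])$ is a cozero set, hence open $F_\sigma$. It is also $G_\delta$, being the intersection of the open sets $f^{-1}((1/2-1/n,1])$ for $n\geq 1$, since those sets shrink to $f^{-1}([1/2,1])$. To get $f$, I would use the fact that countable subsets of the $F$-space $\beta\omega$ are $C^{*}$-embedded. If the closures of $Y\cap K$ and $Z\cap K$ inside $Y\cup Z$ are disjoint, the indicator of $Y\cap K$ is continuous on the countable set $Y\cup Z$. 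It then extends to $\beta\omega$ and restricts to the required $f$ on $X$.

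I expect this last step to be the main obstacle. In general, $Y\cap K$ may accumulate at points of $Z\cap K$, so a single continuous function cannot separate them. The fallback I would try first is to write $Y\cap K=\{y_n\}$ and $Z\cap K=\{z_m\}$ and separate pairs of points. Each single pair $y_n,z_m$ is separated by a clopen subset of $\beta\omega$. I would then aim to assemble $H_0$ as a countable union over $n$ of countable intersections over $m$ of traces of clopen sets, and similarly its complement, so that $H_0$ is $F_\sigma$ and $X\setminus H_0$ is $F_\sigma$ as well. Making both representations hold simultaneously will need the specific combinatorics of $K$ from \cite{OS1}, for instance that countable subsets of $K$ are discrete or nowhere dense in a suitable sense. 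I would check that property in the construction before finalizing the argument.
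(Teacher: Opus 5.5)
Your plan is the paper's: take the space $X$ of the Proposition and check directly that it is a weak $\lambda$-space. But the proposal does not prove the weak $\lambda$ property. The one nontrivial step is left open, and the structure you build on is not that of the space used.

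\textbf{The space is not the one you describe.} In the paper, $X=S\cup P\subseteq I^{\mathfrak{c}}$, where $S$ and $P$ are disjoint dense subsets of $I^{\mathfrak{c}}$. Here $S$ is countable and submaximal, and $P$ is pseudocompact with every countable subset closed and $C^*$-embedded; also $\beta X=\beta P=I^{\mathfrak{c}}$. Being dense in $I^{\mathfrak{c}}$, this $X$ has no isolated points at all. So there is no dense set $D$ of isolated points and no remainder $K\subseteq\omega^*$. Your ``easy'' part is empty, and everything sits in the part you did not handle.

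\textbf{The fallback cannot close the gap, even in your model.} Clopen sets $U_{n,m}\ni y_n$ with $z_m\notin U_{n,m}$ give an $F_\sigma$ set $\bigcup_n\bigcap_m U_{n,m}$ containing $Y\cap K$ and missing $Z\cap K$. Symmetrically you get one around $Z\cap K$. Nothing forces these two sets to be complementary. In fact, every set assembled by countable unions and intersections from countably many clopen sets $W_k$ is a preimage under the continuous map $x\mapsto(\chi_{W_k}(x))_k$ into $2^{\omega}$. So you have only reduced the problem to a $G_\delta$-$F_\sigma$ separation of two disjoint countable sets in a separable metrizable space. That is impossible in general: for $\mathbb{Q}$ and $\mathbb{Q}+\sqrt2$ in $\mathbb{R}$, the Baire category theorem rules it out.

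Two smaller slips. First, the indicator of $Y\cap K$ need not be continuous on $Y\cup Z$ if points of $D$ accumulate at $Y\cap K$, so use the domain $(Y\cup Z)\cap K$. Second, $f^{-1}((1/2,1])$ is $G_\delta$ simply because it is open; your intersection $\bigcap_n f^{-1}((1/2-1/n,1])$ equals $f^{-1}([1/2,1])$, a different set.

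\textbf{The missing idea} is to separate through a Baire-one extension rather than through clopen sets, using the two specific ingredients of the construction. Let $f$ be $0$ on $A$ and $1$ on $B$.
\begin{enumerate}
\item On $S$, which is submaximal and hence hereditarily irresolvable, the bounded Baire-one function $\chi_{B\cap S}$ is fragmented \cite{KM}. Hence it extends to a Baire-one function on $I^{\mathfrak{c}}$ \cite{KM1}.
\item On $P$, every subset of the countable set $P\cap(A\cup B)$ is closed in $P$, so this set is discrete and $f$ is continuous on it. It extends continuously over $P$ by $C^*$-embeddedness, and then over $\beta P=I^{\mathfrak{c}}$ because $P$ is pseudocompact.
\item Combining the two extensions gives a Baire-one $p$ on $X$ with $p|_{A\cup B}=f$. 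Then $p^{-1}([0,\frac13])$ and $p^{-1}([\frac23,1])$ are disjoint $Coz_\delta$ sets.
\item The complements of their defining cozero sets form a countable zero-set cover $\{F_n\}$ of $X$, each member of which misses $A$ or misses $B$. Disjointify it into the $Zer_\sigma$ pieces $F_n\setminus\bigcup_{i<n}F_i$.
\item Take the union of the pieces meeting $A$. This set and its complement are both $Zer_\sigma$, so it is the required $G_\delta$-$F_\sigma$ set.
\end{enumerate}
Failure of the $\Delta_1$-property then follows from the Proposition, as you say. Alternatively, as in the paper, it follows from $S$ being countable and non-scattered together with Theorem 2.9 of \cite{KKL}.
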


\begin{proof} Consider a space $X\subseteq I^{\mathfrak{c}}$ (Theorem 4.5 in \cite{OS1}) where $I=[0,1]$ such that:

(a)  $X=S\cup P$ where $S$ and $P$  are disjoint dense subsets of $I^{\mathfrak{c}}$;

(b) $S$ is a countable submaximal space;

(c) $P$ is a pseudocompact space such that every countable subset of $P$ is $C^*$-embedded.

Note that $X$ is a pseudocompact dense subset of $I^{\mathfrak{c}}$. It follows that $\beta X=I^{\mathfrak{c}}$ (see Theorem 1.7.7 in \cite{HTMT}).

Let $A$ and $B$ be countable disjoint subsets of $X$ and  $f$ be a function from $A\cup B$ into $\mathbb{R}$ such that $f(a)=0$ for each $a\in A$ and $f(b)=1$ for each $b\in B$. Since $X$ is Tychonoff and $A\cup B$ is countable, $f$ is Baire-one function on $A\cup B$.

Let us prove that $f$ is extendable to a Baire-one function on $X$.

(1) Consider the sets $A'=S\cap A$ and $B'=S\cap B$. Let $g: S\rightarrow \mathbb{R}$ be a function such that $g(s)=0$ for each $s\in S\setminus B'$ and $g(s)=1$ for each $s\in B'$. Note that $g$ is a bounded Baire-one function on $S$ and $g|_{A'\cup B'}=f$.

Since $S$ is submaximal (hence, hereditarily irresolvable) and $g$ is  a bounded Baire-one function, $g$ is fragmented (Proposition 1 in \cite{KM}).

(2) By Theorem 6.2 in \cite{KM1}, $g$ is extendable to a Baire-one function $h$ on $I^{\mathfrak{c}}$ (and, hence, on $X$).

(3) Let $f_1:=f|_{P\cap (A\cup B)}$. Since every countable subset of $P$ is closed (in $P$), $f_1$ is a continuous function on $P\cap (A\cup B)$.
Since $P\cap (A\cup B)$ is countable and $C^*$-embedded, $f_1$ is extendable to a continuous function $f_2$ on $P$.
Finally, since  $\beta P=I^{\mathfrak{c}}$ and $f_2$ is bounded (recall that $P$ is pseudocompact), $f_2$ is extendable to a continuous function $f_3$ on $I^{\mathfrak{c}}$.

(4) Let $p=h*f_3$. Then $p|_X$ is Baire-one function and $p|_{A\cup B}=f$.

Note that $A\subset p^{-1}([0,\frac{1}{3}])$ and $B\subset p^{-1}([\frac{2}{3},1])$.
Since $p|_X$ is Baire-one function, $p^{-1}([0,\frac{1}{3}])$ and $p^{-1}([\frac{2}{3},1])$ are disjoint $Coz_{\delta}$-subsets of $X$.

There are sequences $(U_n)_n$ and $(V_n)_n$ of co-zero sets of $X$ such that $p^{-1}([0,\frac{1}{3}])=\bigcap_n U_n$, $p^{-1}([\frac{2}{3},1])=\bigcap_n V_n$.

Let $\mathcal{F}=\{X\setminus U_n, X\setminus V_n : n\in\omega \}$. The countable family $\mathcal{F}$ of zero-sets is a cover of $X$. For the family $\mathcal{F}$ holds: if $F\in \mathcal{F}$, then  $A\cap F=\emptyset$ or $B\cap F=\emptyset$.

Let $\mathcal{F}=\{F_n:n\in\omega\}$. Consider $P_n=F_n\setminus \bigcup_{i<n}F_i$ for every $n\in\omega$. Let $\mathcal{P}=\{P_n:n\in\omega\}$.
Then $\mathcal{P}$  is a countable partition of $X$ consisting of $Zer_{\sigma}$-sets such that $\mathcal{P}$ is refinement of $\mathcal{F}$. Then $A\cap P=\emptyset$ or $B\cap P=\emptyset$ for $P\in \mathcal{P}$. Let $Q=\bigcup\{P\in\mathcal{P}: P\cap A\neq\emptyset\}$. Then $A\subset Q$, $B\subset X\setminus Q$ and $Q$ is a $G_{\delta}$-$F_{\sigma}$-set of $X$. It follows that $X$ is a weak $\lambda$-space.

 It remains to be noted that a pseudocompact space is a $\Delta_1$-space if and only if all countable subsets of $X$ are scattered (Theorem 2.9 in \cite{KKL}). Since $S$ is not scattered, $X$ is not $\Delta_1$-space.
\end{proof}

\begin{theorem} \label{th1} Under Martin's
Axiom there exists a $\gamma$-set that is not weak $\lambda$-set.

\end{theorem}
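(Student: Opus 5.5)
We will build, under MA, a $\gamma$-set $X\subseteq 2^{\omega}$ together with countable disjoint $Y,Z\subseteq X$ that no $G_\delta$-$F_\sigma$ (i.e. $\Delta^0_2$) subset of $X$ separates. The natural idea is to have a countable dense set $D\subseteq X$ split into two dense pieces $Y,Z$ with $Y\cup Z=D$. Then we make $X$ so "thick" around $D$ that any $\Delta^0_2$ set $H\subseteq X$ with $Y\subseteq H$, $Z\cap H=\emptyset$ is impossible.

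The mechanism for the obstruction is as follows. Suppose $H=X\cap G$ with $G$ a $G_\delta$ in $2^\omega$, and $X\setminus H=X\cap G'$ with $G'$ a $G_\delta$ in $2^\omega$. Then $G\supseteq Y$ and $G'\supseteq Z$ are dense $G_\delta$ subsets of $2^\omega$, so $G\cap G'$ is comeager. Hence it is enough that $X$ meets every dense $G_\delta$ set of the form $G\cap G'$ that avoids $X$; more precisely, that $X$ meets every dense $G_\delta$ subset of $2^\omega$ that contains $D$. Each such intersection point lies in $H\cap(X\setminus H)=\emptyset$, a contradiction.

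So the plan is to construct, under MA, a $\gamma$-set $X\supseteq D$ that meets every dense $G_\delta$ containing $D$, where $D=\mathbb{Q}$ (eventually zero sequences), split into two dense parts. I would follow the standard Galvin–Miller construction of $\gamma$-sets under MA (or $\mathfrak p=\mathfrak c$). Enumerate all countable families of clopen sets, i.e. all potential $\omega$-covers, as $\{\mathcal U_\alpha:\alpha<\mathfrak c\}$. Also enumerate all dense $G_\delta$ sets containing $D$ as $\{G_\alpha:\alpha<\mathfrak c\}$. Recursively build $X_\alpha=D\cup\{x_\beta:\beta<\alpha\}$ and a decreasing $\subseteq^*$ tower of clopen-cover data. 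At stage $\alpha$, if $\mathcal U_\alpha$ is an $\omega$-cover of $X_\alpha$, use $\mathfrak p=\mathfrak c$ to pick a sequence $(U_n)\subseteq\mathcal U_\alpha$ that is a $\gamma$-cover of $X_\alpha$. Compatibility with earlier choices is ensured by keeping $\bigcap_{n\ge m}U_n$ having nonempty interior behaviour, as in Galvin–Miller, where the relevant sets $\bigcup_m\bigcap_{n\ge m}U_n$ are kept comeager. Then choose $x_\alpha\in G_\alpha\cap\bigcap_{\beta\le\alpha}\bigcup_m\bigcap_{n\ge m}U^\beta_n$. Galvin–Miller show this intersection of fewer than $\mathfrak c$ comeager sets is comeager under MA, so it is nonempty. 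At the end $X$ is a $\gamma$-set, because each $\omega$-cover of $X$ can be refined to a countable clopen $\omega$-cover, which appears at some stage and is handled there. It is also not weak $\lambda$ by the argument above, since it meets every $G_\alpha$.

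The main obstacle is the Galvin–Miller step. We must ensure that the $\gamma$-subsequences chosen at each stage make $\{x:x\in U_n\text{ for all but finitely many }n\}$ comeager, and that this persists for the points added later. I would try first to quote directly the Galvin–Miller theorem in its strong form: under $\mathfrak p=\mathfrak c$, for every countable dense $D$ there is a $\gamma$-set $X\supseteq D$ that meets every comeager set. Recall that a set containing $D$ and meeting every comeager set is not perfectly meager, even relative to $D$. If I only quote this theorem, the needed density argument is just the two-$G_\delta$ intersection above, since $G\cap G'$ is itself a dense $G_\delta$, hence comeager. This reduces the whole proof to invoking that theorem and the first-paragraph argument.
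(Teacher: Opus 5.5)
\textbf{There is a genuine gap, and it breaks the whole strategy.} The ``strong form'' of Galvin--Miller you plan to quote says that there is a $\gamma$-set $X\supseteq D$ meeting every comeager subset of $2^{\omega}$. This is false. A subset of $2^{\omega}$ meets every dense $G_\delta$ set if and only if it is non-meager, but every $\gamma$-set is meager. To see this, note that $C_p(X)$ is Fr\'echet--Urysohn, so $X$ has property $(\kappa)$ by Sakai. Hence $X$ is always of the first category (Sakai, Theorem 3.2, as recalled in Section 4 of the paper), and taking the perfect set $2^{\omega}$ gives that $X$ is meager in $2^{\omega}$.

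The same point kills your transfinite construction. The sets $\bigcup_m\bigcap_{n\ge m}U_n$ are only $F_\sigma$ and need not be comeager, and Galvin--Miller make no such claim. If they could all be kept comeager while you also hit every $G_\alpha$, you would obtain a non-meager $\gamma$-set.

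Your intermediate reformulation is also a non sequitur. Under the separation hypothesis, $G\cap G'$ is disjoint from $X$, hence from $D$, so it is not a dense $G_\delta$ containing $D$. Moreover, ``$X$ meets every dense $G_\delta$ containing $D$'' holds trivially for every $X\supseteq D$.

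More fundamentally, no argument based only on density and Baire category can show that a $\gamma$-set fails to be weak $\lambda$. Because $X$ is meager, there is a dense $G_\delta$ set disjoint from $X$, so density of $G\supseteq Y$ and $G'\supseteq Z$ can never force $G\cap G'$ to meet $X$. Relativizing the category argument to a dense-in-itself subset of $X$ fails as well, since $\gamma$-sets are perfectly meager.

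The obstruction has to be combinatorial, and this is the idea your proposal is missing. The paper takes $X=S\cup C\subseteq\overline{\omega}^{\uparrow\omega}$, where $S$ is a $\mathfrak b$-scale and $C$ is the countable set of eventually-$\omega$ functions. If $\tilde G$ is a $G_\delta$ set in the compact space $\overline{\omega}^{\uparrow\omega}$ containing $C$, its complement is a $\sigma$-compact subset of $\omega^{\uparrow\omega}$, hence bounded by some $h$. So $\tilde G$ contains every $f\not\le^* h$, and therefore a tail of the $\le^*$-increasing unbounded family $S$. The paper splits $C$ into $C_1,C_2$ by the parity of the cut-off point and runs this dominance argument through the projections onto odd and even coordinates. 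This produces a point of $S$ lying in both $G_\delta$ sets that supposedly separate $C_1$ from $C_2$.
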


\begin{proof} Our space $X$ is the space $H$ from Theorem 10 of \cite{BT}. Let us recall its construction (see Example 8.4 in \cite{BG}).

Given two functions $f,g: \omega\rightarrow \omega$, we write $f\leq^* g$ if the set $\{n\in \omega: f(n)\not\leq g(n)\}$ is finite. We say that a subset $B\subseteq \omega^{\omega}$ is {\it unbounded} if
for any $y\in \omega^{\omega}$ there exists $x\in B$ such that $x\not\leq^* y.$ It is well known that the cardinal $\mathfrak{b}$  can be equivalently defined as the smallest cardinality of an
unbounded subset of $\omega^{\omega}$.

By $\omega^{\uparrow \omega}$ we denote the family of strictly increasing functions from $\omega$ to $\omega$. Note that there exists
a transfinite sequence $\{f_{\alpha}\}_{\alpha<\mathfrak{b}}$ of increasing functions $f_{\alpha} : \omega \rightarrow \omega$ such that the set $S=\{f_{\alpha}\}_{\alpha<\mathfrak{b}}$ is
unbounded in $\omega^{\omega}$ and $f_{\alpha}\leq^* f_{\beta}$ for every $\alpha<\beta$ in $\mathfrak{b}$ (see Example 8.4 in \cite{BG}).
Consider the closure $\overline{\omega}=[0, \omega]$ of $\omega$ in the ordinal $\omega+1$ endowed with the order topology (which is
compact and metrizable). Let $C\subseteq \overline{\omega}^{\omega}$ be the countable set of functions $f : \omega\rightarrow \overline{\omega}$ such that there exists
$n\in \omega$ so that $f(i) <f(j)$ for every $i <j<n$ and $f(k) = \omega$ for all $k\geq n$. Observe that the subspace
$\overline{\omega}^{\uparrow \omega}:= \omega^{\uparrow \omega}\cup C$ in $\overline{\omega}^{\omega}$ is closed, and hence compact.

According to \cite{CRZ}, it is consistent that for any $\mathfrak{b}$-scale $S=\{f_{\alpha}\}_{\alpha<\mathfrak{b}}$ the space $X:=S\cup C$ is a $\gamma$-set.

We claim that the space $X$ is not weak  $\lambda$-set.

Let $\omega=O\cup E$ where $O$ is the set of odd numbers and $E$ is the set of even numbers.
Let $C_1:=\{f\in C:$ exists $n\in O$ so that $f(i) <f(j)$ for every $i <j<n$ and $f(k) = \omega$ for all $k\geq n\}$ and
$C_2:=\{f\in C:$ exists $n\in E$ so that $f(i) <f(j)$ for every $i <j<n$ and $f(k) = \omega$ for all $k\geq n\}$. It is obvious that $C=C_1\cup C_2$, $C_1\cap C_2=\emptyset$ and $|C_1|=|C_2|=\omega.$


 Let $T\in \{O,E\}$. Note that $\pi_T: C_p(\omega, \overline{\omega}) \rightarrow C_p(T, \overline{\omega})$ such that $\pi_T(f)=f|T$ for every $f\in C_p(\omega, \overline{\omega})$ is an open map.

Assume that  $G_1, G_2$ are $G_{\delta}$ in $X$ such that $C_1\subseteq G_1$, $C_2\subseteq G_2$ and $G_1\cap G_2=\emptyset$.
Since $\pi_O$ is an open map, $\pi_O(G_1)$ is $G_{\delta}$ in $\pi_O(X)$. Note that $\pi_O(C_1)$ is a countable set of functions $f : O\rightarrow \overline{\omega}$ such that there exists $n\in O$ so that $f(i) <f(j)$ for every $i <j<n$ and $f(k) = \omega$ for all $k\geq n$. The set $\pi_O(S)$ is a  $\mathfrak{b}$-scale.
 In (\cite{BG}, see Example 8.4) it is proved that $\pi_O(C_1)$ is not $G_{\delta}$ in $\pi_O(C_1)\cup \pi_O(S)$. Moreover, there exists $h_1\in \pi_O(S)$ such that $h_1\in \pi_O(G_1)$.

Analogically, there exists $h_2\in \pi_E(S)$ such that $h_2\in \pi_E(G_1)$. Note that if $h\in S$ such that $h_1 \leq^* h|O$ and $h_2 \leq^* h|E$ then $h\in G_1$.

It remains to be noted that if instead of $G_1$ we considered $G_2$, then there exists $g_1\in \pi_O(S)$ such that $g_1\in \pi_O(G_2)$ and $g_2\in \pi_E(S)$ such that $g_2\in \pi_E(G_2)$. Also if $g\in S$ such that $g_1 \leq^* g|O$ and $g_2 \leq^* g|E$ then $g\in G_2$.

Let $p\in S$ such that $h\leq^* p$ and $g\leq^* p$. Then $p\in G_1\cap G_2$, it is contradiction. It follows that $X$ is not weak $\lambda$-set.

\end{proof}

\section{Application to function spaces}

The Baire property have dual characterization through the presence (or
absence) of a winning strategy for players in the Banach-Mazur topological game \cite{Ox}.

The game $G(X)$ begins with the first $({\bf I})$ player; Player {\bf I} chooses a nonempty open set $V_0\subseteq X$.
Then the second ({\bf II}) player chooses a nonempty open set $V_1\subseteq V_0$.
At the {\it n}th step, player {\bf I} chooses a nonempty open set $V_{2n}\subseteq V_{2n-1}$, and then player {\bf II} chooses
a nonempty open set $V_{2n+1}\subseteq V_{2n}$.
Player {\bf I} wins the game $G(X)$ if $\bigcap\limits_{n\in \omega} V_n=\emptyset$. Otherwise, player {\bf II} wins.

 A topological space $X$ is a Baire space  if and only if player {\bf I} does not have a winning strategy in the game $G(X)$

A topological space $X$ is called a {\it Choquet space}  if player {\bf II} has a winning strategy in the game $G(X)$ \cite{Ox}.

It is clear that any Choquet space is a space with the Baire property.

\medskip
For topological spaces $X$ and $Y$ , we denote by $C_p(X,Y)$ the set $C(X,Y)$ of all continuous
functions from $X$ to $Y$, endowed with the topology of pointwise convergence. A function $f : X \rightarrow Y$
from a topological space $X$ to a topological space $Y$ is called a Baire-one function  if $f$ is the pointwise limit of a sequence $\{f_n : n\in \omega\}\subset C(X, Y )$. Let $B_1(X, Y )$ denote the set of all Baire-one functions from a topological space $X$ to a topological space $Y$,
endowed with the topology of pointwise convergence. Denote by $B_1(X)=B_1(X,\mathbb{R})$.

\medskip

\medskip

A set $Y\subset X$ is called

$\bullet$ {\it $B_1$-embedded} in $X$ if for any $f\in B_1(Y)$ there exists $\tilde{f}\in B_1(X)$ such that $\tilde{f}|Y=f$;

$\bullet$ {\it $B^*_1$-embedded} in $X$ if for any bounded  $f\in B_1(Y)$ there exists $\tilde{f}\in B_1(X)$ such that $\tilde{f}|Y=f$.

\medskip

In \cite{Os2}, it is proved the following results where weak $\lambda$-space denote by $\lambda_2$-space.

\begin{theorem}(Theorem 3.8 in \cite{Os2}) For a Tychonoff space $X$ the following assertions are equivalent.

(1)   $B_1(X,[0,1])$ is a Choquet space.

(2)   Every countable subset of $X$ is $B^*_1$-embedded.

(3)  $X$ is a weak $\lambda$-space.

\end{theorem}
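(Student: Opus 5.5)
The plan is to prove the cycle $(1)\Rightarrow(3)\Rightarrow(2)\Rightarrow(1)$. Throughout, a Baire-one function $f$ on a Tychonoff space has the property that $f^{-1}$ of a closed set is a $Coz_\delta$-set and $f^{-1}$ of an open set is a $Zer_\sigma$-set. This follows from the standard representation of $f$ as a pointwise limit of continuous $f_n$. Conversely, any bounded function that is a uniform limit of finite linear combinations of characteristic functions of $Zer_\sigma$–$Coz_\delta$ sets is Baire-one.

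\emph{$(3)\Rightarrow(2)$.} Let $Y\subseteq X$ be countable and $f\in B_1(Y)$ bounded, say with values in $[0,1]$. First approximate $f$ uniformly by step functions. For each dyadic level $k$, the finitely many sets $f^{-1}([j/2^k,(j+1)/2^k))$ partition $Y$ into countable pieces. Applying weak $\lambda$ repeatedly to a piece and the union of the others gives $G_\delta$-$F_\sigma$ sets separating them. Since $X$ is Tychonoff, we may take these sets to be $Zer_\sigma$–$Coz_\delta$: countable sets are contained in $Zer_\sigma$ sets, and one refines as in the proof of Theorem \ref{th2}. Making the pieces disjoint yields a step function $s_k\in B_1(X)$ with $|s_k|_Y-f|\le 2^{-k}$.

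A naive limit does not converge off $Y$, so the standard repair is a telescoping construction. Extend successively the differences $f-s_k|_Y$, which are bounded by $2^{-k}$, to functions $t_k$ on $X$ bounded by $2^{-k}$, and set $\tilde f=\sum_k t_k$. This is a uniform limit of Baire-one functions and hence Baire-one. The same separation argument extends functions of small sup-norm while keeping the bound, because one only ever needs to partition finitely many countable level sets.

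\emph{$(2)\Rightarrow(1)$.} Describe a strategy for player \textbf{II}. A basic open set in $B_1(X,[0,1])$ is determined by a finite set $F\subseteq X$ and open intervals around prescribed values. Player \textbf{II} responds by shrinking to closed subintervals of length $<1/n$ and enumerating the finitely many new points, so that after $\omega$ moves we obtain a countable set $Y=\bigcup F_n$. At each $y\in Y$ the nested intervals determine a value $f(y)\in[0,1]$. Any function on the countable set $Y$ is Baire-one, since $Y$ with the subspace topology is countable Tychonoff and hence a countable union of closed points, so every function on it is $F_\sigma$-measurable. By (2), $f$ extends to $\tilde f\in B_1(X)$. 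Composing with a retraction $\mathbb R\to[0,1]$ keeps $\tilde f$ Baire-one with values in $[0,1]$, and this function lies in every $V_n$. Hence \textbf{II} wins.

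\emph{$(1)\Rightarrow(3)$.} This is the main obstacle. Given disjoint countable $A,B$, I would argue by contraposition. If no $G_\delta$-$F_\sigma$ set separates $A$ and $B$, construct a strategy for player \textbf{I}, or defeat every strategy of \textbf{II}, as follows. Enumerate $A\cup B$ and have \textbf{I} repeatedly demand values near $0$ on new points of $A$ and near $1$ on points of $B$. Each play then produces a function in $\bigcap V_n$ that is Baire-one and separates $A$ from $B$ via the sets $\{\le 1/3\}$ and $\{\ge 2/3\}$, which are $Coz_\delta$. From this one obtains a $G_\delta$-$F_\sigma$ separation by the partition argument used in Theorem \ref{th2}, a contradiction.

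The subtlety is that \textbf{II} chooses the sets, so \textbf{I} cannot force the values at $A\cup B$ directly. I would first try to show that a winning strategy for \textbf{II}, played against \textbf{I}'s opening moves that fix values $0$ or $1$ on an increasing enumeration of $A\cup B$, must yield a nonempty intersection. Any element of that intersection then separates $A$ and $B$ as required.
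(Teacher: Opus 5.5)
\textbf{There is a genuine gap: $(1)\Rightarrow(3)$ is not proved, and the plan you sketch for it cannot work as stated.} (The paper only quotes this theorem from \cite{Os2} and gives no proof, so I judge the argument on its own.) Your $(2)\Rightarrow(1)$ and $(3)\Rightarrow(2)$ are essentially sound.

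\textbf{Why the sketch for $(1)\Rightarrow(3)$ fails.} Your last sentence, that any element of $\bigcap_n V_n$ separates $A$ from $B$, is false. Player \textbf{II} moves between \textbf{I}'s moves, and nothing in the hypothesis stops the given winning strategy from constraining points of $A\cup B$ before \textbf{I} reaches them. For instance, \textbf{II} could restrict $g(a_n)$ to $(\tfrac12,\tfrac12+\tfrac1n)$ and $g(b_n)$ to $(\tfrac12-\tfrac1n,\tfrac12)$ for infinitely many $n$. Since \textbf{I} may only shrink \textbf{II}'s sets, every $g$ in the intersection then has $\tfrac12\in\overline{g(A)}\cap\overline{g(B)}$, and no contradiction arises. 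No enumeration of $A\cup B$ avoids this, because \textbf{I} fixes only finitely many points per move. This is exactly where the Choquet case differs from the Baire case: for property $(\kappa_2)$, \textbf{I} can pick fresh finite sets avoiding \textbf{II}'s constrained points, but here $A$ and $B$ are fixed. So you need an argument that uses the fact that \emph{every} play against the strategy has nonempty outcome, not a single play steered by \textbf{I}.

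\textbf{One way to close the gap.} Let $D=A\cup B$ and $R=\{g|_D: g\in B_1(X,[0,1])\}\subseteq[0,1]^D$.

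First, the restriction map onto $R$ is open. Given a basic set $\{g: g(x)\in I_x,\ x\in F\}$ and a member $g_0$, proceed as follows. Each $d\in D$ has a cozero neighbourhood missing the finite set $F\setminus D$. The union of these countably many neighbourhoods is cozero, so its complement $Z$ is a zero set with $F\setminus D\subseteq Z$ and $Z\cap D=\emptyset$. Replace $g_0$ on $Z$ by a continuous $k$ with $k(x)\in I_x$ for $x\in F\setminus D$, i.e. take $g_0(1-\chi_Z)+k\chi_Z$, which is Baire-one since $\chi_Z$ is. This does not change the restriction to $D$.

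Hence $R$ is a dense Choquet subspace of the compact metrizable space $[0,1]^D$, so it is comeager there (Oxtoby; Kechris, Thm.~8.17). Moreover, $R$ is closed under coordinatewise continuous maps $[0,1]^k\to[0,1]$.

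Let $\Theta_1,\Theta_2$ be the homeomorphisms of $[0,1]^D$ that are the identity on $A$-coordinates and act as $t\mapsto1-t$, respectively $t\mapsto t^2$, on $B$-coordinates. Pick $y\in R\cap\Theta_1^{-1}(R)\cap\Theta_2^{-1}(R)$, and extend $y,\Theta_1y,\Theta_2y$ to $g_0,g_1,g_2\in B_1(X,[0,1])$. The arcs $\{(t,t,t)\}$ and $\{(t,1-t,t^2)\}$ are disjoint compact subsets of $[0,1]^3$. So a continuous $\Psi$ equal to $0$ on the first arc and $1$ on the second makes $\Psi(g_0,g_1,g_2)$ Baire-one, equal to $0$ on $A$ and $1$ on $B$. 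The partition argument of Theorem \ref{th2} then gives the $G_\delta$-$F_\sigma$ set.

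\textbf{Two smaller fixes in $(3)\Rightarrow(2)$.}
\begin{itemize}
\item What makes the separating sets $Zer_\sigma$--$Coz_\delta$ is that a countable set inside a $G_\delta$ set lies in a $Coz_\delta$ subset of it. It is not that countable sets lie in $Zer_\sigma$ sets.
\item ``Extending the differences $f-s_k|_Y$'' exactly is the original problem. Instead, truncate $s_{k+1}-s_k$ to $[-2^{1-k},2^{1-k}]$ (this changes nothing on $Y$) and take $s_0$ plus the sum of these terms.
\end{itemize}
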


\begin{theorem}(Theorem 3.7 in \cite{Os2}) For a Tychonoff space $X$ the following assertions are equivalent.

(1)   $B_1(X)$ is a Choquet space.

(2)   Every countable subset of $X$ is $B_1$-embedded.

(3)  $X$ is a $\Delta_1$-space.

\end{theorem}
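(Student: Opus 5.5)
The proof runs through the two equivalences $(1)\Leftrightarrow(2)$ and $(2)\Leftrightarrow(3)$, modelled on the proof of Theorem 3.8 of \cite{Os2} quoted above. One preliminary fact is used throughout. A countable Tychonoff space $Y$ is perfectly normal and zero-dimensional, so every subset of $Y$ is both $F_\sigma$ and $G_\delta$. Hence every function $Y\to\mathbb{R}$ lies in $B_1(Y)$. So (2) says that every function defined on a countable $Y\subseteq X$ extends to a Baire-one function on $X$.

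\emph{$(1)\Leftrightarrow(2)$.} $B_1(X)$ is a dense subspace of $\mathbb{R}^X$ containing $C_p(X)$. A basic open set is determined by a finite set of points and rational intervals.

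For $(2)\Rightarrow(1)$, player \textbf{II} plays as follows. He records the finitely many points mentioned so far and shrinks each interval to one of diameter at most $2^{-n}$ with closure inside the previous one. At the end, the points used form a countable $Y$, and the nested intervals define a function $f$ on $Y$. By (2), $f$ extends to some $\tilde f\in B_1(X)$, and $\tilde f$ lies in $\bigcap_n V_n$. So this is a winning strategy.

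For $(1)\Rightarrow(2)$, assume some $f\in B_1(Y)$ has no extension. Any winning strategy of \textbf{II} can be played against a run of \textbf{I} that enumerates $Y$ and forces the values of $f$ on it. Since $f$ has no extension, the intersection of the play is empty, which contradicts that the strategy is winning. This is the same diagonal argument as for $B_1(X,[0,1])$, with $[0,1]$ replaced by $\mathbb{R}$. The unboundedness is exactly what forces extensions of unbounded functions.

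\emph{$(3)\Rightarrow(2)$.} Let $f\colon Y\to\mathbb{R}$ with $Y$ countable, and put $A_n=f^{-1}([n,n+1))$ for $n\in\mathbb{Z}$. By the $\Delta_1$-property, take a point-finite open expansion $\{U_n\}$. Since the $A_n$ are countable, we may shrink each $U_n$ to a cozero set still containing $A_n$. Choose $\varphi_n\colon X\to[0,1]$ continuous with $\operatorname{coz}\varphi_n=U_n$. On each $A_n$, the bounded function $f-n$ extends to a Baire-one function $g_n$ on $X$. I would use weak $\lambda$-ness, which $\Delta_1$ implies, together with Theorem 3.8 of \cite{Os2} to get $g_n$. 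Finally, glue the pieces with the point-finite family: define $\tilde f$ on the $Zer_\sigma$ pieces $P_n=U_n\setminus\bigcup_{m<n}U_m$ (refined so that $A_n\subseteq P_n$) by $\tilde f=n+g_n$ on $P_n$. This is the same partition trick as in Theorem \ref{th2}. Point-finiteness is what makes the partition fine enough to separate all the $A_n$ at once. Checking that a countable $Zer_\sigma$-partition gluing of Baire-one functions is Baire-one is routine.

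\emph{$(2)\Rightarrow(3)$.} This is the step I expect to be the main obstacle. Given disjoint countable $A_n$, define $f=n$ on $A_n$ and extend it to $\tilde f=\lim_k h_k$ with $h_k\in C(X)$. The open sets $\{x:|h_k(x)-n|<1/2\}$ are not point-finite for a single $k$. So I would enumerate $\bigcup_n A_n=\{y_j\}$ and, for each $n$, choose $k_n$ increasing so that $h_k$ is close to $n$ on the finitely many points $y_j\in A_n$ with $j\le n$. I would then repair the rest of $A_n$ by extending a second function that encodes the index $j$. The goal is sets $U_n=\{x:|h_{k}(x)-n|<1/2\}$ for suitable $k$ such that every $x$ with $\tilde f(x)$ finite lies in only finitely many of them.

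If this direct construction stalls, my fallback is to cite the characterization in \cite{KKL}. It says $X$ is $\Delta_1$ iff every countable $Y\subseteq X$ admits such sequential extensions, and I would reduce to that statement.
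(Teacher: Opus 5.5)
The paper does not prove this theorem; it only quotes it from [Os2]. So your proposal has to stand on its own. Your $(2)\Rightarrow(1)$ strategy for \textbf{II} is fine, but the other three implications have genuine gaps.

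\textbf{The step $(1)\Rightarrow(2)$.} Player \textbf{I} cannot ``force the values of $f$''. \textbf{I} must move inside \textbf{II}'s previous set, and a winning strategy for \textbf{II} may immediately confine $g(y_0)$ to an interval that misses $f(y_0)$. Moreover, the fact that $f$ has no extension says nothing about whether $\bigcap_n V_n$ is empty, since that intersection may contain functions with other values on $Y$.

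The missing idea is to work with $E=\{g|_Y: g\in B_1(X)\}\subseteq\mathbb{R}^Y$.
\begin{itemize}
\item The restriction map $B_1(X)\to E$ is open. Take a basic neighbourhood determined by a finite set $F$. For each $x\in F\setminus Y$, choose a zero-set $Z=\varphi^{-1}(0)$ containing $x$ and missing the countable set $(Y\cup F)\setminus\{x\}$; such $Z$ exists in a Tychonoff space. The function $\chi_Z=\lim_n\max(0,1-n|\varphi|)$ is Baire-one, so adding multiples of these $\chi_Z$ corrects the values on $F\setminus Y$ without changing $g|_Y$.
\item Hence $E$ is Choquet, as an open continuous image of $B_1(X)$, and it is a dense linear subspace of the Polish space $\mathbb{R}^Y$.
\item A metrizable space in which \textbf{II} has a winning strategy contains a dense completely metrizable subspace (White; an Oxtoby-type tree argument). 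So $E$ is comeager in $\mathbb{R}^Y$.
\item For any $f\in\mathbb{R}^Y$ the set $f-E$ is also comeager, so $E\cap(f-E)\neq\emptyset$ and $f\in E+E=E$. Thus $E=\mathbb{R}^Y$, which is (2).
\end{itemize}

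\textbf{The step $(2)\Rightarrow(3)$.} Here you give only a heuristic, and the bookkeeping with $k_n$ and a second encoding function is unnecessary. Put $f=n$ on $A_n$ and extend it to $\tilde f=\lim_k h_k$ with $h_k\in C(X)$. Then take tail unions $U_n=\bigcup_{k\ge n}\{x:|h_k(x)-n|<1/2\}$. These sets are open and contain $A_n$, because $h_k(a)\to n$ for $a\in A_n$. If some $x$ lay in infinitely many $U_n$, there would be $k_n\ge n$ with $h_{k_n}(x)>n-1/2$ for infinitely many $n$. This contradicts the convergence of $(h_k(x))_k$, so the family is point-finite.

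\textbf{The step $(3)\Rightarrow(2)$.} The step ``refined so that $A_n\subseteq P_n$'' is exactly the difficulty, and you do not justify it. A point of $A_n$ may lie in $U_m$ for some $m<n$, and the infinite sets $A_n$ cannot simply be deleted from the $U_m$. The following fix also makes the detour through weak $\lambda$ and Theorem 3.8 unnecessary; that detour also relied on the unproved claim that $\Delta_1$ implies weak $\lambda$.
\begin{itemize}
\item Apply $\Delta_1$ to the singletons of an enumeration $Y=\{p_k\}$.
\item Shrink the resulting open sets to cozero sets $V_k\ni p_k$ that miss $p_0,\dots,p_{k-1}$.
\item Take the max-index pieces $D_k=V_k\setminus\bigcup_{m>k}V_m$. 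By point-finiteness the $D_k$ partition $\bigcup_kV_k$. Since $p_k\notin V_m$ for $m>k$, we get $p_k\in D_k$. Each $D_k$ is the intersection of a cozero set with a zero-set.
\item Set $\tilde f=f(p_k)$ on $D_k$ and $\tilde f=0$ off $\bigcup_kV_k$. Every preimage of an open set is a countable union of such pieces, hence $Zer_\sigma$, so $\tilde f$ is a Baire-one extension of $f$.
\end{itemize}
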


\medskip

For function spaces $B_1(X,[0,1])$ and $B_1(X)$ we obtain the following corollaries to Theorems \ref{th2} and \ref{th1}.

\begin{corollary}{\it  There exists (in ZFC) a separable pseudocompact space $X$ such that $B_1(X,[0,1])$ is a Choquet space, but $B_1(X)$ is not.}

\end{corollary}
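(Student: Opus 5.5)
The corollary is immediate once Theorem \ref{th2} is combined with the two characterizations from \cite{Os2}. I will take for $X$ the space built in the proof of Theorem \ref{th2}, namely the space of Theorem 4.5 in \cite{OS1}. This is $X=S\cup P\subseteq I^{\mathfrak{c}}$, where $S$ is a countable dense submaximal subspace and $P$ is a dense pseudocompact subspace in which every countable subset is $C^*$-embedded.

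First I record that $X$ is separable and pseudocompact. Separability holds because $S$ is countable and dense in $I^{\mathfrak{c}}$, hence dense in $X$. Pseudocompactness is part of the statement of the Proposition (Theorem 4.5 in \cite{OS1}); it was also noted in the proof of Theorem \ref{th2}. It can be checked directly as well: $P\subseteq X\subseteq \overline{P}$ with $P$ pseudocompact, and a space containing a dense pseudocompact subspace is pseudocompact, since an unbounded continuous function on $X$ would already be unbounded on the dense set $P$.

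Second, by Theorem \ref{th2}, $X$ is a weak $\lambda$-space. Theorem 3.8 of \cite{Os2}, implication (3)$\Rightarrow$(1), then gives that $B_1(X,[0,1])$ is a Choquet space.

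Third, the proof of Theorem \ref{th2} shows that $X$ is not a $\Delta_1$-space. That argument uses the criterion of \cite{KKL}: a pseudocompact space is $\Delta_1$ if and only if all of its countable subsets are scattered. Here $S$ is countable but not scattered, because it is dense in $I^{\mathfrak{c}}$ and hence has no isolated points. Theorem 3.7 of \cite{Os2}, implication (1)$\Rightarrow$(3) in contrapositive form, then gives that $B_1(X)$ is not Choquet.

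There is no real obstacle here. The only points needing care are that the space of Theorem \ref{th2} is exactly the separable pseudocompact one, and that the \cite{Os2} equivalences apply because $X$ is Tychonoff, being a subspace of $I^{\mathfrak{c}}$.
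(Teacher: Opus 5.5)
Your proposal is correct and follows the paper's route: take the space of Theorem~\ref{th2} (Theorem 4.5 of \cite{OS1}), which is a weak $\lambda$-space but not a $\Delta_1$-space, and apply Theorems 3.8 and 3.7 of \cite{Os2}. Your explicit checks of separability, pseudocompactness (via the dense pseudocompact subspace $P$) and non-scatteredness of $S$ are sound, and they fill in what the paper simply takes from the Proposition and the proof of Theorem~\ref{th2}.
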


This  result resolve the question (Problem 3.2) posed in \cite{Os2}.
\medskip

\medskip
In \cite{Os3} it is proved that {\it a space $X$ has the property $(\kappa)$  if and only if $B_1(X)$ is Baire}.

By Theorem 3.16 in \cite{Osip1}, $B_1(X)$ is Baire for any $\gamma$-space $X$.

\begin{corollary} {\it It is consistent that there exists a zero-dimensional separable metrizable space $X$  such that $B_1(X)$ is Baire, while $B_1(X,[0,1])$ is not Choquet.}
\end{corollary}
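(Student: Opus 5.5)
The plan is to take for $X$ the space supplied by Theorem \ref{th1}. Under Martin's Axiom this is the $\gamma$-set $X=S\cup C\subseteq \overline{\omega}^{\uparrow\omega}\subseteq \overline{\omega}^{\omega}$, built from a $\mathfrak{b}$-scale $S$ and the countable set $C$, and it is not a weak $\lambda$-set. Since MA is consistent with ZFC, the existence of $X$ is consistent, which is exactly the form of the claim. I then check three things: the topological requirements on $X$, that $B_1(X)$ is Baire, and that $B_1(X,[0,1])$ is not Choquet.

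First, $X$ is a subspace of $\overline{\omega}^{\omega}$. This is a countable product of copies of the compact metrizable zero-dimensional space $[0,\omega]$, so it is compact, metrizable and zero-dimensional. Both metrizability and zero-dimensionality pass to subspaces, so $X$ is a zero-dimensional separable metrizable space, and in particular Tychonoff.

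Second, $X$ is a $\gamma$-set, hence a $\gamma$-space. Theorem 3.16 in \cite{Osip1} then gives that $B_1(X)$ is Baire. As a cross-check, one can use the Gerlits--Nagy result that $C_p(X)$ is Fréchet--Urysohn, which gives property $(\kappa)$ by Sakai's characterization; the result of \cite{Os3} then also yields that $B_1(X)$ is Baire.

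Third, $X$ is not a weak $\lambda$-space, by Theorem \ref{th1}. Applying the implication (1)$\Rightarrow$(3) of Theorem 3.8 in \cite{Os2} in contrapositive form, $B_1(X,[0,1])$ is not a Choquet space.

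The only delicate point is making sure Theorem \ref{th1} yields all the needed properties simultaneously. The consistency of ``$X$ is a $\gamma$-set'' comes from MA (via \cite{CRZ}), and the failure of the weak $\lambda$-property is proved in ZFC for any $\mathfrak{b}$-scale construction. So in a model of MA the same $X$ has both properties, and the corollary is a consistency statement rather than a ZFC one.
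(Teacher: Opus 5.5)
Your proposal is correct and is essentially the paper's argument. You take the $\gamma$-set $X=S\cup C\subseteq\overline{\omega}^{\omega}$ of Theorem~\ref{th1} (under MA), get that $B_1(X)$ is Baire from Theorem 3.16 of \cite{Osip1}, and conclude that $B_1(X,[0,1])$ is not Choquet from Theorem 3.8 of \cite{Os2}, since $X$ is not a weak $\lambda$-space. Your extra cross-check for the Baire part (Gerlits--Nagy, then Sakai's characterization, then \cite{Os3}) is also valid and uses only results quoted in the paper.
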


Note that if $B_1(X)$ is Baire then $B_1(X, [0,1])$ is Baire, too. It implies the following result.

\begin{corollary}{\it It is consistent that there exists a zero-dimensional separable metrizable space $X$  such that $B_1(X,[0,1])$ is Baire, while $B_1(X,[0,1])$ is not Choquet.}
\end{corollary}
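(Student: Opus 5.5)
The last corollary follows by chaining results already in the paper, applied to the space of Theorem \ref{th1}. Under Martin's Axiom (the consistency assumption), Theorem \ref{th1} gives a $\gamma$-set $X=S\cup C\subseteq \overline{\omega}^{\omega}$ that is not a weak $\lambda$-set. I will first record that $X$ is zero-dimensional, separable and metrizable. It is a subspace of $\overline{\omega}^{\omega}$, a countable product of copies of the compact metrizable zero-dimensional space $\omega+1$. Hence $X$ is metrizable and zero-dimensional, and it is second countable, so it is separable.

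The first step is to show that $B_1(X,[0,1])$ is Baire. Since $X$ is a $\gamma$-space, Theorem 3.16 in \cite{Osip1} gives that $B_1(X)$ is Baire. The remark just before the corollary then gives that $B_1(X,[0,1])$ is Baire as well.

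If that remark needs justification, I would argue through property $(\kappa)$. By \cite{Os3}, $B_1(X)$ is Baire if and only if $X$ has property $(\kappa)$. The Baire property of $B_1(X,[0,1])$ should follow from $(\kappa)$ by the same Banach--Mazur game argument, played on basic open sets determined by finitely many points. Alternatively, one could use that $B_1(X,(0,1))\cong B_1(X)$ via a homeomorphism $(0,1)\cong\mathbb{R}$ and that it is dense in $B_1(X,[0,1])$; Baireness transfers from a dense Baire subspace that is also a dense $G_\delta$-type subspace. This justification is the only delicate point, and I would rely on the paper's stated remark.

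The second step is to show that $B_1(X,[0,1])$ is not Choquet. Theorem 3.8 in \cite{Os2}, applied to the Tychonoff space $X$, says that $B_1(X,[0,1])$ is Choquet exactly when $X$ is a weak $\lambda$-space. Since $X$ is not a weak $\lambda$-set, $B_1(X,[0,1])$ is not Choquet. Combining the two steps proves the corollary.
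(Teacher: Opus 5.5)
Your proposal is correct and follows the paper's argument: Theorem \ref{th1} under MA, then Theorem 3.16 of \cite{Osip1}, then the remark passing Baireness from $B_1(X)$ to $B_1(X,[0,1])$, then Theorem 3.8 of \cite{Os2}. Your second justification of that remark already works without any $G_\delta$ condition, since $B_1(X,(0,1))\cong B_1(X)$ is dense in $B_1(X,[0,1])$ and any space containing a dense Baire subspace is itself Baire.
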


This  result resolve the question (Problem 5.1) posed in \cite{Os2}.

\section{Spaces having property $(\kappa_2)$}

\begin{definition}(Definition 3.1 in \cite{Os2}){\it A space $X$ has the property $(\kappa_2)$,  if for every disjoint sequence $\{C_n: n\in \omega\}$
of finite subsets $C_n\subseteq X$, $C_n=A_n\cup B_n$, $A_n\cap B_n=\emptyset$, $n\in \omega$, there exists a subsequence
$\{C_{n_k}: k\in \omega\}$ such that the sets $A'=\bigcup\limits_{k=1}^{\infty} A_{n_k}$ and $B'=\bigcup\limits_{k=1}^{\infty} B_{n_k}$ are $G_{\delta}$-separated, i.e., there
exist $G_{\delta}$-sets $U$ and $V$ such that $U\cap V=\emptyset$, $A'\subseteq U$  and $B'\subseteq V.$}
\end{definition}

In (\cite{Os2}, Theorem 3.4), it is proved that {\it $B_1(X,[0,1])$ is Baire if and only if $X$ has the property $(\kappa_2)$.}

In this, section, we investigate spaces having property $(\kappa_2)$.

Note that every space having property $(\kappa)$ and every weak $\lambda$-space has the property $(\kappa_2)$.

A separable metrizable space $X$ is said to be {\it always of the first category} if every
dense in itself subset $A$ of $X$ is of the first category in itself (\cite{Kur},  p. 516). For a subspace $X$ of a separable metrizable
space $Y$, $X$ is always of the first category iff for every perfect set $P$ of $Y$ (i.e. $P$ is dense in itself and closed in $Y$),
the set $P\cap X$ is of the first category in $P$ (\cite{Kur}, Theorem 1, p. 516). The latter condition is usually said to be {\it perfectly
meager} (\cite{Miller}, p. 213). If $Y$ is a Polish space then every weak $\lambda$-set $X$ of $Y$ is perfectly meager (Theorem 4.4 in \cite{RRS}).
Every separable metrizable space having property $(\kappa)$ is always of the first category (Theorem 3.2 in \cite{Sakai}).

\begin{theorem}{\label{th4}} Every separable metrizable space having property $(\kappa_2)$ is always of the first category.
\end{theorem}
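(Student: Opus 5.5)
We argue by contradiction. We show that property $(\kappa_2)$ passes to subspaces, reduce to a nonempty dense-in-itself Baire space, and then build a sequence of finite sets which no subsequence can $G_\delta$-separate.

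\emph{Heredity.} Let $X$ be separable metrizable with property $(\kappa_2)$ and let $Y\subseteq X$. Then $Y$ also has property $(\kappa_2)$. Indeed, a disjoint sequence of finite sets $C_n=A_n\cup B_n$ in $Y$ is such a sequence in $X$. So there are a subsequence and disjoint $G_\delta$-sets $U,V$ of $X$ with $A'\subseteq U$ and $B'\subseteq V$. Then $U\cap Y$ and $V\cap Y$ are disjoint $G_\delta$-sets of $Y$ separating $A'$ and $B'$.

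\emph{Reduction to a Baire piece.} Suppose $X$ is not always of the first category. Then there is a dense-in-itself $A\subseteq X$ which is not meager in itself. Let $M$ be the union of all open meager subsets of $A$. By the Banach category theorem (valid in metrizable spaces), $M$ is meager in $A$. Put $Y=A\setminus\overline{M}$, which is open in $A$.
\begin{itemize}
\item $Y\neq\emptyset$. Otherwise $A=\overline{M}$, so $A\setminus M$ is nowhere dense and $A$ would be meager.
\item $Y$ is dense in itself, being a nonempty open subset of a dense-in-itself space.
\item $Y$ is Baire. A nonempty open subset of $Y$ which is meager in $Y$ is an open meager subset of $A$. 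It is therefore contained in $M$, which is disjoint from $Y$, a contradiction.
\end{itemize}
By heredity, $Y$ has property $(\kappa_2)$. This step, invoking Banach's category theorem correctly, is the main technical point. The rest is a diagonal construction.

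\emph{Construction.} Fix a countable base $\{W_k:k\in\omega\}$ of nonempty open subsets of $Y$. Recursively, for each $n$ and each $k\le n$, choose two distinct points $a_{n,k},b_{n,k}\in W_k$. All of them must avoid the finitely many points chosen earlier. This is possible because $Y$ has no isolated points, so each $W_k$ is infinite. Set $A_n=\{a_{n,k}:k\le n\}$, $B_n=\{b_{n,k}:k\le n\}$ and $C_n=A_n\cup B_n$. The $C_n$ are pairwise disjoint finite sets and $A_n\cap B_n=\emptyset$.

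Apply $(\kappa_2)$ in $Y$ to obtain a subsequence $(n_j)$ and disjoint $G_\delta$-sets $U\supseteq A'$, $V\supseteq B'$. Since $n_j\to\infty$, every $W_k$ contains $a_{n_j,k}$ and $b_{n_j,k}$ once $n_j\ge k$. Hence $A'$ and $B'$ are both dense in $Y$, and so $U$ and $V$ are dense $G_\delta$-subsets of $Y$. As $Y$ is Baire, $U\cap V$ is dense in $Y$ and in particular nonempty. This contradicts $U\cap V=\emptyset$, so $X$ is always of the first category.
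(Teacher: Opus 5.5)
Your proof is correct, and its core strategy is the paper's: build pairwise disjoint finite sets whose every infinite sub-union is dense, split each into two halves, and use $(\kappa_2)$ to get two disjoint dense $G_\delta$ sets in a non-meager space.

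The packaging differs in two places.

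\textbf{The finite sets.} The paper passes to a dense copy $D\cong\mathbb{Q}$ inside $A$ and cites Sakai's Lemma 3.1 to get the finite sets $F_n$. It then pairs them as $A_n=F_{2n-1}$, $B_n=F_{2n}$. Your recursive choice of $a_{n,k},b_{n,k}\in W_k$ from a countable base proves the needed density property directly. That makes your argument self-contained and avoids both citations.

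\textbf{The Baire reduction.} Your passage to a Baire open piece $Y$ via Banach's category theorem is unnecessary, so it is not really the main technical point. The paper works directly in $A$: it applies $(\kappa_2)$ in $X$ and intersects the resulting $G_\delta$ sets with $A$, which is your heredity remark used implicitly. Then $U\cap A$ and $V\cap A$ are $G_\delta$ in $A$ and contain dense sets, so each is a countable intersection of dense open subsets of $A$. Hence $A\setminus U$ and $A\setminus V$ are meager in $A$. If $U\cap V=\emptyset$, then $A=(A\setminus U)\cup(A\setminus V)$ is meager in itself, a contradiction. So you could run your base construction in $A$ itself and drop the $M$, $Y$ step entirely. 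The Baire property of the piece is never needed, only the non-meagerness of $A$.
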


\begin{proof} Let $X$ be a space having property $(\kappa_2)$, and let $A$ be a dense in itself subset of $X$. We show that $A$ is of the first
category in itself. Since $A$ is a dense in itself separable metrizable space, $A$ contains a dense subspace $D$ homeomorphic
to $\mathbb{Q}$ (\cite{Eng}, 6.2.A(e), p. 371). By Lemma 3.1 in \cite{Sakai}, there exists a pairwise disjoint family $\{F_n\}_{n\in \omega}$ of finite subsets
of $D$ such that for every infinite $I\subset\omega$, $\bigcup\limits_{n\in I} F_n$ is dense in $D$.
 Note that  $\bigcup\limits_{n\in I} F_n$ is also homeomorphic to $\mathbb{Q}$.
Let $C_1=F_1\cup F_2$, $C_n=F_{2n-1}\cup F_{2n}$. Since $X$ is a space having property $(\kappa_2)$, there exists a subsequence $\{C_{n_{k}}: k\in \omega\}$ such that the sets $A'=\bigcup\limits_{k=1}^{\infty} F_{2n_{k}-1}$ and $B'=\bigcup\limits_{k=1}^{\infty} F_{2n_{k}}$ are $G_{\delta}$-separated, i.e., there
exist $G_{\delta}$-sets $U$ and $V$ such that $U\cap V=\emptyset$, $A'\subseteq U$  and $B'\subseteq V.$  Since $A'$ and $B'$ are dense subsets of $A$,  $U$ and $V$ are dense subsets of $A$, too. If we assume that $A$ is of the second category in itself, then $U\cap V\not=\emptyset$, it is a contradiction.
\end{proof}

A subset $X$ of $\mathbb{R}$ is said to be a {\it Luzin set} if it is uncountable and for every set $A$ of the first category in $\mathbb{R}$, $A\cap X$ is
countable. Assuming the continuum hypothesis, there exists a Luzin set, for example refer to \cite{Miller} (Theorem 2.1).

\begin{corollary} {\it Let $X$ be $\mathbb{R}$, $\mathbb{P}$, $\mathbb{C}$ or a Luzin set. Then $B_1(X,[0,1])$ is meager.}
\end{corollary}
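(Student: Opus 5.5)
The corollary follows by combining Theorem \ref{th4} with the characterization from \cite{Os2} (Theorem 3.4): $B_1(X,[0,1])$ is Baire if and only if $X$ has property $(\kappa_2)$. First I will show that none of the listed spaces is always of the first category. By Theorem \ref{th4}, each of them then fails $(\kappa_2)$, so $B_1(X,[0,1])$ is not Baire. The remaining step is to upgrade ``not Baire'' to ``meager''.

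\textbf{Step 1: none of the spaces is always of the first category.} For $X=\mathbb{R}$, $\mathbb{P}$ or $\mathbb{C}$ (the Cantor set), $X$ is a dense-in-itself Polish space. By the Baire category theorem it is of the second category in itself, so it is not always of the first category. Let $X$ be a Luzin set. Removing a countable set changes nothing below, so I may assume $X$ is dense in itself: the set of condensation points of $X$ in $X$ is dense in itself and co-countable in $X$. Suppose some nonempty open subset $W$ of this set $X'$ were meager in $X'$. Then $W$ would be a countable union of nowhere dense subsets of $X'$. Each of these is nowhere dense in $\mathbb{R}$ too, because $X'$ is dense in an interval. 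Hence $W$ is countable by the Luzin property. This is a contradiction, since every point of $W$ is a condensation point. So $X'$ is a dense-in-itself subset that is not of the first category in itself.

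\textbf{Step 2: from ``not Baire'' to ``meager''.} This is the main obstacle, since Theorem \ref{th4} together with \cite{Os2} only yields failure of the Baire property. I would use homogeneity. $B_1(X,[0,1])$ is a convex subset of $\mathbb{R}^X$. I would try to show that any nonempty open set $O$ contains a basic open set that is homeomorphic to an open subset of $O'$, for any other nonempty open $O'$. This can be done by affine maps $f\mapsto a+tf$ acting coordinatewise. Such a map rescales the finitely many constrained coordinates into the right intervals, and it preserves Baire-one functions with values in $[0,1]$ when $t$ is a suitable function that is piecewise constant on finitely many points. Then the union $M$ of all open sets that are meager is open. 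If the complement of $\overline{M}$ were nonempty, it would be an open subspace that is Baire in itself, and by transport this contradicts $B_1(X,[0,1])$ not being Baire. So $M$ is dense.

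It remains to see that a space in which every nonempty open set contains a meager open set is meager. Take a maximal disjoint family of meager open sets. For a separable-type argument, note that $B_1(X,[0,1])$ has countable cellularity, because it is dense in $[0,1]^X$. The family is therefore countable, and its union $U$ is meager and dense. Since $X\setminus U$ is closed nowhere dense, the whole space is meager.

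Alternatively, I could replace Step 2 by redoing the proof of Theorem \ref{th4} directly in function space. The failure of $(\kappa_2)$ on $A$ would produce, in the style of the proofs in \cite{Os2}, countably many closed nowhere dense sets covering $B_1(X,[0,1])$. I would fall back on this if the homogeneity argument has a gap.
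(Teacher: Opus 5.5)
Your proposal is correct, but it reaches meagerness by a different route from the paper.

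\textbf{The paper's route.} The paper's proof is two lines. It asserts that none of these spaces is always of the first category, applies Theorem \ref{th4}, and then cites Theorem 3.4 of \cite{Os2} in its full dichotomy form: $B_1(X,[0,1])$ is \emph{meager} if and only if $X$ fails $(\kappa_2)$. So the upgrade from ``not Baire'' to ``meager'', which you identify as the main obstacle, is already part of the cited result.

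\textbf{Your route.} You use only the weaker form ``Baire iff $(\kappa_2)$'' and prove the dichotomy yourself. Nonempty open sets of $B_1(X,[0,1])$ contain homeomorphic nonempty open subsets, so the union of the meager open sets is dense. Countable cellularity then makes the whole space meager; the Banach category theorem would do this directly, with no chain condition needed. This is self-contained, and it shows the Baire/meager dichotomy is a structural feature of $B_1(X,[0,1])$ rather than something tied to $(\kappa_2)$. The price is length, plus two points you should make explicit.

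\begin{itemize}
\item Your maps $f\mapsto a+tf$ require $a$ and $t$ (constant off a finite set, $t>0$) to be Baire-one. This holds because points of a metrizable $X$ are zero-sets. That is fine for the four spaces here, but not for an arbitrary Tychonoff $X$.
\item You must first shrink both basic open sets so that they constrain the same finite set $F$ of coordinates, each by an open interval. If a coordinate is free in the domain but constrained in the target, the image of $[0,1]$ under an affine injection is a closed proper subinterval, and the image set is not open.
\end{itemize}

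\textbf{Step 1.} You also supply the verification that the paper leaves to the reader. Your Luzin-set argument is sound, but the reason you give (``$X'$ is dense in an interval'') is beside the point. A nowhere dense subset of any subspace of $\mathbb{R}$ is automatically nowhere dense in $\mathbb{R}$, with no density hypothesis needed.
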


\begin{proof} Note that in any case, being always of the first category is not satisfied. By Theorem 3.4 in \cite{Os2}, {\it $B_1(X,[0,1])$ is meager if and only if $X$ does not have the property  $(\kappa_2)$.}
\end{proof}

\begin{corollary} {\it Let $Y$ be a Polish space. Then every subspace $X$ having property $(\kappa_2)$ of $Y$ is perfectly meager.}
\end{corollary}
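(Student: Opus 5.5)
The plan is to combine Theorem \ref{th4} with the classical equivalence between ``always of the first category'' and ``perfectly meager'' quoted above (\cite{Kur}, Theorem 1, p. 516).

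First we check that Theorem \ref{th4} applies. Let $Y$ be Polish and let $X\subseteq Y$ have property $(\kappa_2)$ in its subspace topology. Then $X$ is separable metrizable, since every subspace of a separable metrizable space is. By Theorem \ref{th4}, $X$ is always of the first category: every dense-in-itself subset of $X$ is of the first category in itself.

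Next we pass to perfect meagerness. We use the quoted characterization for a subspace $X$ of the separable metrizable space $Y$: $X$ is always of the first category iff for every perfect $P\subseteq Y$ the set $P\cap X$ is of the first category in $P$. This gives the claim at once.

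If one prefers a direct check of this implication, we argue as follows.
\begin{itemize}
\item Let $P$ be perfect in $Y$ and put $Z=P\cap X$.
\item Let $A$ be the set of points of $Z$ that are accumulation points of $Z$, i.e.\ the perfect kernel of $Z$ in the relative sense; this is the largest dense-in-itself subset of $Z$.
\item The remainder $Z\setminus A$ is scattered and separable metrizable, hence countable. Since $P$ has no isolated points, each singleton is nowhere dense in $P$, so $Z\setminus A$ is meager in $P$.
\item $A$ is meager in itself, because $X$ is always of the first category. Write $A=\bigcup_n N_n$ with each $N_n$ nowhere dense in $A$.
\item Each $N_n$ is nowhere dense in $\overline{A}$, and $\overline{A}$ is a closed subset of $P$. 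Hence each $N_n$ is nowhere dense in $P$, so $A$ is meager in $P$.
\end{itemize}
Therefore $Z=(Z\setminus A)\cup A$ is meager in $P$, which is exactly the statement that $X$ is perfectly meager.

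The main obstacle is the transfer of meagerness from ``in itself'' to ``in $P$''. It rests on two facts: a set nowhere dense in a subspace is nowhere dense in the subspace's closure, and that closure sits inside $P$. Both are standard, so in the paper it suffices to cite Theorem \ref{th4} and \cite{Kur}.
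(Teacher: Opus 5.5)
Your argument is correct and follows the paper's (implicit) route: Theorem~\ref{th4} shows that $X$ is always of the first category, and Kuratowski's characterization for subspaces of a separable metrizable $Y$ turns this into perfect meagerness. There is one slip in your optional direct check: the set of points of $Z$ that are accumulation points of $Z$ need not be dense in itself (for $Z=\{0\}\cup\{1/n:n\geq 1\}$ it is $\{0\}$), so take $A$ to be the union of all dense-in-itself subsets of $Z$ (the terminal Cantor--Bendixson derivative of $Z$); then $Z\setminus A$ is indeed scattered, hence countable, and the rest of your check goes through.
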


In \cite{FM}, forcing was used to construct a consistent example of a $Q$-set $X$ which is concentrated on a countable dense subset $F$ of $2^{\omega}$ ($F$ is in the ground model). The proof actually shows that $X$ is concentrated in every dense subset $F'$ of $F$ which is in the ground model. Therefore, the space $Y=X\cup F$ is perfectly meager and
and it does not have the property  $(\kappa_2)$.

\begin{proposition} It is consistent that there exists a perfectly meager set which does not have the property  $(\kappa_2)$.

\end{proposition}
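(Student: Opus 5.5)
The plan is to use the model of \cite{FM}. There, by forcing, one obtains a $Q$-set $X\subseteq 2^{\omega}$ that is concentrated on a countable dense set $F\subseteq 2^{\omega}$ lying in the ground model. Moreover, $X$ is concentrated on every dense $F'\subseteq F$ from the ground model. We set $Y=X\cup F$ and check two things: $Y$ is perfectly meager, and $Y$ fails $(\kappa_2)$.

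\emph{Perfect meagerness.} Let $P\subseteq 2^\omega$ be perfect. The set $F\cap P$ is countable, so it is meager in $P$. Next, $X$ is a $Q$-set, so every subset of $X$ is relatively $G_\delta$; this makes $X$ a $\lambda$-set. A $\lambda$-set has no perfectly embedded Polish part of size continuum. More directly, in the construction of \cite{FM} the set $X$ is concentrated on a countable set and hence has strong measure-zero-like smallness. The standard fact that a set concentrated on a countable set is perfectly meager then settles this half. Since finite unions of perfectly meager sets are perfectly meager, $Y$ is perfectly meager.

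\emph{Failure of $(\kappa_2)$.} We use the same device as in Theorem \ref{th4}. Identify $F$ with a copy of $\mathbb{Q}$ and apply Lemma 3.1 of \cite{Sakai}. This gives a disjoint sequence of finite sets $F_n\subseteq F$, all in the ground model, such that $\bigcup_{n\in I}F_n$ is dense in $F$ for every infinite $I$. Put $C_n=F_{2n-1}\cup F_{2n}$, $A_n=F_{2n-1}$ and $B_n=F_{2n}$. Suppose some subsequence gave disjoint $G_\delta$ sets $U\supseteq A'$ and $V\supseteq B'$ in $Y$.

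The main obstacle is that the subsequence $(n_k)$ may be added by the forcing, so $A'$ and $B'$ need not lie in the ground model. I would first try to handle this by ccc/properness arguments approximating $I$ by a ground-model infinite subset. Failing that, I would use that $X$ is concentrated on every ground-model dense subset of $F$ together with a density-ideal trick: choose, in the ground model, a family of infinite sets such that every infinite $I$ contains one of them.

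Granting $A',B'$ dense in $F$ and concentrated-on, write $Y\setminus U$ as a countable union of closed sets missing $A'$. Their complements are open sets containing $A'$, so each covers all but countably many points of $X$. Hence $U$ contains all but countably many points of $X$, and likewise $V$. Since $X$ is uncountable, $U\cap V\neq\emptyset$, which is a contradiction.
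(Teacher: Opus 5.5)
You follow the paper's route: the Fleissner--Miller $Q$-set $X$ concentrated on a ground-model countable dense $F$, the space $Y=X\cup F$, and Sakai's disjoint finite sets $F_n$. Your closing computation is also fine, \emph{provided} $X$ is concentrated on $A'$ and $B'$. But there are two gaps.

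The first is in the perfect-meagerness half, where the ``standard fact'' you rely on is false. A set concentrated on a countable set need not be perfectly meager: a Luzin set in $2^\omega$ is concentrated on every countable dense set, yet it is not meager in $2^\omega$. What works is the route you started and then dropped. $X$ is a $Q$-set, hence a $\lambda$-set, and every $\lambda$-set is always of the first category. To see this, let $A\subseteq X$ be dense in itself and $D\subseteq A$ countable and dense. Then $D$ is a dense $G_\delta$ of $A$, so $A\setminus D$ is meager in $A$, and $D$ is meager in $A$ too. Alternatively, use $\lambda\Rightarrow$ weak $\lambda\Rightarrow(\kappa_2)$ together with Theorem \ref{th4}, or Theorem 4.4 of \cite{RRS}. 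Knowing only that $X$ has no perfect subset would not suffice.

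The second gap is decisive. You obtain the failure of $(\kappa_2)$ only after ``granting'' that $X$ is concentrated on $A'=\bigcup_k F_{2n_k-1}$ and $B'=\bigcup_k F_{2n_k}$, which is exactly what must be proved. \cite{FM} gives concentration only on dense subsets of $F$ that lie in the ground model, while the subsequence $(n_k)$ lives in the extension.

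Neither of your remedies can work in general. If the forcing adds a Cohen real, as finite-support ccc iterations of the kind used for such $Q$-sets do, then some infinite $I\subseteq\omega$ contains no infinite ground-model set. Such an $I$ can neither be approximated from below by a ground-model set nor contain a member of any ground-model family. Nor can you find a ground-model dense $F''\subseteq\bigcup_{n\in I}F_n$ some other way: since the $F_n$ are pairwise disjoint, the set $\{n: F''\cap F_n\neq\emptyset\}$ would then be an infinite ground-model subset of $I$.

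So the ground-model concentration property alone cannot close the argument. You would need to extract a stronger statement from the Fleissner--Miller preservation proof, e.g.\ that in the final model $X$ is concentrated on every set $\bigcup_{n\in I}F_n$ with $I$ infinite. The paper's proof also goes from the ground-model property to the failure of $(\kappa_2)$ with a bare ``therefore'' and does not supply this step. As it stands, your argument proves the proposition only under that unverified hypothesis.
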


A space is said to be {\it hereditarily Baire} if every closed subspace is Baire. If a space is hereditarily Baire, then it
does not have any $G_{\delta}$-set homeomorphic to $\mathbb{Q}$.


\begin{theorem}\label{th5} Let $X$ be a first countable hereditarily Baire regular space. Then the following assertions are equivalent:

1. $X$ has property $(\kappa_2)$;

2. $X$ has property $(\kappa)$;

3. $X$ is scattered;

4. $X$ is $\Delta_1$-space;

5. $X$ is $\lambda$-space;

6. Every countable subsets of $X$ is scattered.
\end{theorem}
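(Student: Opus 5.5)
We prove the equivalences by arranging them in a cycle of implications. The heart of the cycle is the step $1\Rightarrow 6$: it is the only place where hereditary Baireness is really used together with the $G_\delta$-separation built into property $(\kappa_2)$. The remaining steps are standard facts about first countable spaces.

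\emph{$3\Rightarrow 6$, $5\Rightarrow 4$, $4\Rightarrow 2$, $2\Rightarrow 1$, $5\Rightarrow 1$.} Every subspace of a scattered space is scattered, so $3\Rightarrow 6$. The paper already notes that every $\lambda$-space is a $\Delta_1$-space, which gives $5\Rightarrow 4$. For $4\Rightarrow 2$: a pairwise disjoint sequence of finite sets is a disjoint sequence of countable sets, so the $\Delta_1$-property yields a point-finite open expansion of the whole sequence, and hence of any subsequence. Finally, the paper notes that every space with $(\kappa)$ has $(\kappa_2)$, and so does every weak $\lambda$-space; since a $\lambda$-space is a weak $\lambda$-space, both $2\Rightarrow 1$ and $5\Rightarrow 1$ hold. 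So it suffices to prove $1\Rightarrow 6$, $6\Rightarrow 3$ and $3\Rightarrow 5$.

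\emph{$1\Rightarrow 6$ (main step).} Suppose some countable subset of $X$ is not scattered. Then it contains a nonempty dense-in-itself countable subset. By regularity, first countability and Sierpi\'nski's theorem, this subset contains a set $D$ homeomorphic to $\mathbb{Q}$; put $Y=\overline{D}$. The set $Y$ is closed in $X$, so it is Baire. Since $D$ is countable and dense in itself, $D$ is meager in $Y$, and hence $Y\setminus D$ is comeager in $Y$.

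Now apply Lemma 3.1 of \cite{Sakai} to $D$, exactly as in the proof of Theorem \ref{th4}. This gives disjoint finite sets $F_n\subseteq D$ such that $\bigcup_{n\in I}F_n$ is dense in $D$, and hence in $Y$, for every infinite $I\subseteq\omega$. Pair them into $C_n=F_{2n-1}\cup F_{2n}$ and use $(\kappa_2)$. We obtain $G_\delta$-sets $U,V$ in $X$ with $U\cap V=\emptyset$ that contain dense subsets of $Y$. Then $U\cap Y$ and $V\cap Y$ are dense $G_\delta$ subsets of the Baire space $Y$, so they must intersect, which is a contradiction.

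\emph{$6\Rightarrow 3$.} If $X$ is not scattered, it has a nonempty dense-in-itself subset. By first countability we can build inductively a countable dense-in-itself subset: start from a point, and at each stage add, for every point already chosen, a sequence from the set converging to it. This contradicts $6$. (Hereditary Baireness is not needed here.)

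\emph{$3\Rightarrow 5$.} Let $X$ be scattered and first countable, and let $C$ be a countable subset. We need $C$ to be $G_\delta$. Each point has countable character, so each singleton is $G_\delta$, but a countable union of $G_\delta$ sets need not be $G_\delta$. To get around this, use hereditary Baireness once more. It is known (Debs, or the standard fact) that in a hereditarily Baire space a countable set $C$ is $G_\delta$ iff $\overline{C}$ contains no dense-in-itself part relative to $C$, i.e.\ iff $C$ is scattered. Alternatively, argue directly. Since $X$ is scattered, $\overline{C}$ is scattered and first countable, and the set $\overline{C}\setminus C$ can be shown to be $F_\sigma$-complemented inside $\overline{C}$ by transfinite induction on Cantor--Bendixson rank: the points of rank $\alpha$ in $C$ form a relatively discrete set, each point of which is isolated inside a $G_\delta$ neighbourhood.

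I expect this last step to be the main obstacle. If the direct induction becomes messy, I would instead route the cycle as $6\Rightarrow 4\Rightarrow 5$. For $6\Rightarrow 4$, I would construct the point-finite open expansion of a disjoint sequence of countable scattered sets by shrinking first-countable neighbourhoods, using Theorem 2.19 of \cite{KKL} as a model. Then $4\Rightarrow 5$ follows from Theorem 2.19 of \cite{KKL}, since first countability gives countable pseudocharacter.
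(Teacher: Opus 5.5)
\emph{The gap is the step $3\Rightarrow 5$.} Your $1\Rightarrow 6$, $6\Rightarrow 3$ and the easy arrows are fine. Your $1\Rightarrow 6$, which runs Sakai's lemma and the Baire argument directly inside $Y=\overline{D}$, is in fact cleaner than the paper's appeal to Theorem \ref{th4} for $\overline{Z}$, a space not known to be metrizable.

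The problem is that $3\Rightarrow 5$ is the only arrow in your cycle leading from $\{1,2,3,6\}$ into $\{4,5\}$. Without it you have not even shown $3\Rightarrow 1$, since you reach $2$ and $1$ only through $4$. The ``standard fact'' you invoke is false as stated. $[0,\omega_1]$ is compact, hence hereditarily Baire, and it is scattered, yet the countable set $\{\omega_1\}$ is not $G_\delta$.

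So a $G_\delta$-condition on points is indispensable. Once you have it, the whole content of the step is controlling countably many points at once, and neither of your sketches does that. The Cantor--Bendixson induction only aims at making $C$ a $G_\delta$ relative to $\overline{C}$, and you would then still need $\overline{C}$ to be $G_\delta$ in $X$, which nothing in your argument provides. The fallback $6\Rightarrow 4$ is only announced, not carried out.

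\emph{The missing idea is a point-finite open expansion $\{U_k\}$ of the singletons of $C=\{c_k: k\in\omega\}$.} Given such an expansion, first countability finishes exactly as in Theorem 2.19 of \cite{KKL}. Write $\{c_k\}=\bigcap_m W_{k,m}$ with $W_{k,m}$ open and decreasing in $m$. Then $\bigcap_m \bigcup_k (U_k\cap W_{k,m})=C$, because a point outside $C$ lies in only finitely many $U_k$.

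The paper gets the expansion by citing Theorem 2.5 of \cite{KKL}: regular scattered spaces are $\Delta_2$, hence $\Delta_1$. It also gets $3\Rightarrow 2$ separately from Sakai. A direct argument is also available. Choose open sets $U_k\ni c_k$ inductively so that:
\begin{itemize}
\item every point of $U_k\setminus\{c_k\}$ has Cantor--Bendixson rank smaller than that of $c_k$;
\item $\overline{U_k}\cap C\subseteq U_k$;
\item for every $j<k$, either $U_k\subseteq U_j$ (when $c_k\in U_j$) or $U_k\cap\overline{U_j}=\emptyset$ (otherwise).
\end{itemize}
The condition $\overline{U_k}\cap C\subseteq U_k$ can be met inside any prescribed neighbourhood. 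In a Tychonoff space take $f^{-1}([0,r))$ for a Urysohn function $f$ and some $r\notin f(C)$; in a regular space a routine induction along an enumeration of $C$ works. If some $x$ lay in infinitely many $U_k$, those sets would form a decreasing chain. Along that chain the ranks of the corresponding $c_k$ would strictly decrease, which is impossible.
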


\begin{proof}

$(1)\Rightarrow (3)$. Suppose that $X$ is not scattered. Let $Y$ be a closed dense in itself subset of $X$. Since $X$ is first countable,
$Y$ contains a countable dense in itself subset $Z$. Since $Z$ is a countable, dense in itself and metrizable space, it is
homeomorphic to $\mathbb{Q}$ (\cite{Eng}, 6.2.A(d), p. 370). Then $\overline{Z}$ is dense in itself. By Theorem \ref{th4}, $\overline{Z}$ is the first category. Since $X$ is hereditarily Baire, $\overline{Z}$ is Baire, it is a contradiction.

$(3)\Rightarrow (2)$. By Corollary 3.8 in \cite{Sakai}, every scattered space has property $(\kappa)$.

$(2)\Rightarrow (1)$. It is trivial.

$(3)\Rightarrow (4)$. By Theorem 2.5 in \cite{KKL}, every regular scattered space belong to the class $\Delta_2$ and hence $X$ belong to the class $\Delta_1$.

$(4)\Leftrightarrow (5)$. By Theorem 2.19 in \cite{KKL}, if $X$ is a space such that every point of $X$ is $G_{\delta}$ then $X$ is a $\Delta_1$-space if and only if $X$ is a $\lambda$-space.

$(4)\Leftrightarrow (6)$. Since $X$ is a regular hereditary Baire space, by Proposition 2.5 in \cite{KKL}, every countable subset of X is scattered.

$(6)\Leftrightarrow (2)$. By Corollary 3.3 in \cite{T}, if all countable subsets of a space $X$ are scattered, then X has the property
$(\kappa)$.

\end{proof}

\begin{corollary} {\it Let $X$ be the Michael line or the Sorgenfrey line. Then $B_1(X,[0,1])$ is  meager.}
\end{corollary}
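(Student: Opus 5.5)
The plan is to reduce the statement to Theorem \ref{th5} together with the characterization from \cite{Os2} (Theorem 3.4) that $B_1(X,[0,1])$ is meager if and only if $X$ lacks property $(\kappa_2)$. So it suffices to show that neither the Michael line nor the Sorgenfrey line has property $(\kappa_2)$. For each space, we will verify the hypotheses of Theorem \ref{th5}: first countable, regular, hereditarily Baire. We then note that the space is not scattered, so condition (3) fails and hence (1) fails.

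For the Sorgenfrey line $\mathbb{S}$ the hypotheses are standard. It is first countable via the base $[x,x+1/n)$, and it is regular, indeed perfectly normal. For hereditary Baireness we take a closed $F\subseteq\mathbb{S}$. The difference between $F$ and its Euclidean closure is countable: each point of $\overline{F}^{\mathbb{R}}\setminus F$ is the right endpoint of an interval missing $F$ in the Sorgenfrey sense, and there are only countably many such gaps. We then argue that $F$ is Baire by comparing Sorgenfrey-open and Euclidean-open sets. Every nonempty Sorgenfrey-open set in $F$ contains a nonempty set that is open relative to $F$ in a set with Euclidean-Baire behaviour, since $\mathbb{S}$-meager sets coincide with Euclidean-meager sets.

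This is the step I expect to be the main obstacle. The cleanest route is probably to cite the known fact that the Sorgenfrey line is hereditarily Baire, or to prove the stronger statement that every closed subspace contains a dense completely metrizable-like structure modulo a countable set. Alternatively, we can sidestep hereditary Baireness entirely and argue directly. $\mathbb{S}$ is Baire and dense in itself, and every countable dense-in-itself subset of $\mathbb{S}$ would yield, through the proof of Theorem \ref{th4}, a contradiction with the Baire property of its closure. Non-scatteredness is trivial, since $\mathbb{S}$ has no isolated points.

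For the Michael line $\mathbb{M}$ (the reals with the irrationals isolated), first countability and regularity are clear: rational points keep their Euclidean neighborhoods, and the space is even paracompact Hausdorff. $\mathbb{M}$ is not scattered, because $\mathbb{Q}$ as a subspace carries its usual topology and is dense in itself. For hereditary Baireness, take a closed $F$. Its isolated irrational points form an open dense subset of the irrational part. Any open dense set in $F$ must contain all isolated points, so an intersection of countably many open dense sets is still dense provided the rational part is handled. Here closedness of $F$ in $\mathbb{M}$ together with Euclidean completeness is used on $\overline{F\cap\mathbb{Q}}$. Once both spaces satisfy the hypotheses and are non-scattered, Theorem \ref{th5} gives failure of $(\kappa_2)$, and Theorem 3.4 of \cite{Os2} yields that $B_1(X,[0,1])$ is meager.
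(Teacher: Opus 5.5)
Your route is the one the paper implicitly takes: check that the space is first countable, regular, hereditarily Baire and not scattered, then apply Theorem~\ref{th5} and Theorem 3.4 of \cite{Os2}. It works for the Sorgenfrey line. It breaks down for the Michael line $\mathbb{M}$, because $\mathbb{M}$ is \emph{not} hereditarily Baire. Every irrational is isolated, so $\mathbb{P}$ is open and $\mathbb{Q}$ is closed in $\mathbb{M}$. The subspace topology on $\mathbb{Q}$ is the usual one, so $\mathbb{Q}$ is a closed subspace of $\mathbb{M}$ that is meager in itself. Your sketch fails exactly at $F=\mathbb{Q}$. There are no isolated points to absorb the intersection. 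The Euclidean completeness of $\overline{F\cap\mathbb{Q}}^{\mathbb{R}}=\mathbb{R}$ is of no help, since that closure is not contained in $F$. So Theorem~\ref{th5} says nothing about $\mathbb{M}$, and the same objection applies to reading this case of the corollary off Theorem~\ref{th5}.

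The statement is still true for $\mathbb{M}$, but you must show directly that $\mathbb{M}$ fails $(\kappa_2)$. The key is to use the Baire property of the Euclidean line, not of a closed subspace of $\mathbb{M}$.

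\emph{Choosing the sets.} Enumerate the rational open intervals as $J_0,J_1,\dots$. Let $F_n\subseteq\mathbb{Q}$ consist of fresh rationals, one from each of $J_0,\dots,J_n$. Then the $F_n$ are pairwise disjoint, and $\bigcup_{n\in I}F_n$ is Euclidean dense for every infinite $I\subseteq\omega$. Put $A_n=F_{2n}$ and $B_n=F_{2n+1}$. For any subsequence, $A'$ and $B'$ are then Euclidean-dense subsets of $\mathbb{Q}$.

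\emph{No separation.} Suppose $A'\subseteq U=\bigcap_k U_k$ and $B'\subseteq V=\bigcap_k V_k$ with all $U_k,V_k$ open in $\mathbb{M}$. Rationals keep their Euclidean neighbourhoods, so each $U_k$ contains a Euclidean open set $W_k\supseteq A'$, and each $V_k$ contains a Euclidean open set $W'_k\supseteq B'$. These sets are dense in $\mathbb{R}$. By the Baire category theorem in $\mathbb{R}$, $\bigcap_k W_k\cap\bigcap_k W'_k\neq\emptyset$, hence $U\cap V\neq\emptyset$. So $\mathbb{M}$ lacks $(\kappa_2)$, and $B_1(\mathbb{M},[0,1])$ is meager.

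For the Sorgenfrey line, your remark that Sorgenfrey-meager and Euclidean-meager sets coincide concerns subsets of the whole line. It does not by itself show that a closed $F$ is Baire. You can cite hereditary Baireness of $\mathbb{S}$, which is true. Alternatively, use your direct argument with the same sets $F_n$: their infinite unions are Sorgenfrey-dense, so $U$ and $V$ would be disjoint dense $G_\delta$ subsets of the Baire space $\mathbb{S}$. That is impossible, and this version avoids the hereditary hypothesis entirely.
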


Recall that a topological space $X$ is sequential if every sequentially closed subset $A$ of $X$ is closed in $X.$

Applying Theorem 3.16 and Proposition 3.12 in \cite{T}, we get a following result.

\begin{theorem} Let $X$ be a sequential hereditarily Baire regular space. Then the following assertions are equivalent:

1. $X$ has property $(\kappa_2)$;

2. $X$ has property $(\kappa)$;

3. $X$ is scattered;

4. $X$ is $\Delta_1$-space;

5. Every countable subsets of $X$ is scattered.
\end{theorem}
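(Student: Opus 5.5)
The plan is to follow the scheme of Theorem \ref{th5}, replacing first countability by sequentiality. The cycle of implications is $(3)\Rightarrow(2)\Rightarrow(1)\Rightarrow(3)$, with the remaining conditions attached to (3).

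\textbf{The easy implications.}
\begin{itemize}
\item $(3)\Rightarrow(2)$: this is Corollary 3.8 in \cite{Sakai}, since every scattered space has property $(\kappa)$.
\item $(2)\Rightarrow(1)$: trivial, as noted before Theorem \ref{th4}.
\item $(3)\Rightarrow(4)$: by Theorem 2.5 in \cite{KKL}, regular scattered spaces belong to $\Delta_2\subseteq\Delta_1$.
\item $(4)\Rightarrow(5)$: a regular hereditarily Baire $\Delta_1$-space has all countable subsets scattered (Proposition 2.5 in \cite{KKL}).
\item $(5)\Rightarrow(2)$: if all countable subsets are scattered, then $X$ has property $(\kappa)$ (Corollary 3.3 in \cite{T}).
\end{itemize}
Combined with the main implication below, these close all cycles. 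There is no $\lambda$-space item here, because points need not be $G_\delta$.

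\textbf{The key implication $(1)\Rightarrow(3)$.} Suppose $X$ has $(\kappa_2)$ but is not scattered. Take a nonempty closed dense-in-itself $Y\subseteq X$. It is hereditarily Baire and sequential, since closed subspaces of sequential spaces are sequential. By Proposition 3.12 and Theorem 3.16 in \cite{T}, a sequential (regular) non-scattered space contains a countable dense-in-itself subset $Z$ which is, or can be shrunk to, a copy of $\mathbb{Q}$ with good closure behaviour. Concretely, sequentiality lets us build $Z$ by repeatedly adjoining convergent sequences of non-isolated points, so every point of $Z$ is a limit of a sequence from $Z$. Then $Z$ is countable, regular and first countable in its own topology, hence metrizable and homeomorphic to $\mathbb{Q}$.

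Next, copy the proof of Theorem \ref{th4}. Apply Lemma 3.1 of \cite{Sakai} to get disjoint finite sets $F_n\subseteq Z$ whose union over any infinite index set is dense in $Z$. Pair them as $C_n=F_{2n-1}\cup F_{2n}$ and apply $(\kappa_2)$. This gives disjoint $G_\delta$ sets $U,V$ of $X$, each containing a dense subset of $Z$ and hence of $\overline{Z}$. Therefore $U\cap\overline{Z}$ and $V\cap\overline{Z}$ are disjoint dense $G_\delta$ subsets of $\overline{Z}$. But $\overline{Z}$ is closed, hence Baire by hereditary Baireness, so their intersection must be nonempty. This contradiction shows $X$ is scattered.

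\textbf{Main obstacle.} The hard step is producing $Z$ so that the Sakai-type family of finite sets exists. In a merely sequential space, $Z$ dense-in-itself need not be first countable, and the lemma from \cite{Sakai} is stated for metrizable (or first countable) countable spaces. I would lean on Theorem 3.16 of \cite{T}, which by its use here presumably states that in sequential regular spaces property $(\kappa)$ is equivalent to all countable subsets being scattered, or produces such a sequence. If that is unavailable, I would build the $F_n$ directly by a diagonal enumeration of convergent sequences inside a countable sequentially-closed-under-limits set.
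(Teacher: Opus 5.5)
Your chain $(3)\Rightarrow(4)\Rightarrow(5)\Rightarrow(2)\Rightarrow(1)$ uses the same citations as the proof of Theorem~\ref{th5}, and the closing Baire-category contradiction inside $\overline{Z}$ is correct. The gap is the step that makes $Z$ a copy of $\mathbb{Q}$.

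\textbf{The first-countability claim is false.} From ``every point of $Z$ is the limit of a sequence from $Z$'' you infer that $Z$ is first countable. Collapse the closed set $\omega\times\{0\}$ of $\omega\times\mathbb{Q}$ to a single point. The result is a countable, dense-in-itself, regular Fr\'echet--Urysohn space (a closed image of a metrizable space). Every point in it is the limit of a nontrivial sequence, yet the collapsed point has uncountable character. Your recursive choice of sequences does not rule this out in $Z$, as you note yourself under ``Main obstacle''. So Lemma 3.1 of \cite{Sakai} cannot be invoked. Attributing the missing step to Proposition 3.12 and Theorem 3.16 of \cite{T} is a guess about their content, not an argument; the paper's own proof is only that citation.

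\textbf{The repair.} The fallback you mention at the end is the right one, and it makes $\mathbb{Q}$ unnecessary. In the sequential $T_1$ space $Y$, each $y$ is the limit of an injective sequence in $Y\setminus\{y\}$: the set $Y\setminus\{y\}$ is not closed, hence not sequentially closed. Adjoining such witnessing sequences recursively gives a countable $Z=\{z_k:k\in\omega\}\subseteq Y$ with injective sequences $(s_k(m))_{m\in\omega}$ in $Z\setminus\{z_k\}$ converging to $z_k$. What you need is that $Z$ contain a witnessing sequence for each of its points, not that $Z$ be sequentially closed.

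Now put $F_n=\{s_k(m_{k,n}):k\le n\}$. Choose $m_{k,n}$ strictly increasing in $n$, with the chosen points distinct and avoiding $F_0\cup\dots\cup F_{n-1}$; this is possible because only finitely many terms of each injective $s_k$ are excluded. For infinite $I\subseteq\omega$ and each $k$, the points $s_k(m_{k,n})$ with $n\in I$, $n\ge k$, form a subsequence of $s_k$. Hence $z_k\in\overline{\bigcup_{n\in I}F_n}$, so every infinite union of the $F_n$ is dense in $Z$. This is exactly what Lemma 3.1 of \cite{Sakai} supplied in the proof of Theorem~\ref{th4}.

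With this in place of the first-countability claim, the rest of your proof of $(1)\Rightarrow(3)$ goes through unchanged, using only $T_1$ rather than regularity. It gives a self-contained argument where the paper only cites \cite{T}.
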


\medskip

The following Figure \ref{fig:diagram1} shows the relationship between the study spaces, examples and Question 1 (Q.1).

\begin{figure}[H]
\begin{center}
\makebox[0pt]{\begin{tikzpicture}[xscale=7.7, yscale=2, >=Stealth]

    \node (TL) at (0,2) {$\Delta_1\text{--spaces}$};
    \node (TR) at (1,2) {$\text{Spaces with }(\kappa)$};

    \node (ML) at (0,1) {$\text{Weak }\lambda\text{--spaces}$};
    \node (MR) at (1,1) {$\text{Spaces with }(\kappa_2)$};

    \node (BR) at (1,0) {$\text{Perfectly meager}$};

    \draw[line width=0.8pt, double, double distance=2pt, ->] (0.18,2) -- (0.78,2);

    \draw[line width=0.8pt, double, double distance=2pt, ->] (0.23,1) -- (0.76,1);

    \draw[line width=0.8pt, double, double distance=2pt, ->] (0,1.8) -- (0,1.2);

    \draw[line width=0.8pt, double, double distance=2pt, ->] (0.95,1.8) -- (0.95,1.2);

    \draw[line width=0.8pt, double, double distance=2pt, ->] (0.95,0.8) -- (0.95,0.2);


    \draw[dashed, ->] (0.05, 1.2) -- node[pos=0.5] {$\times$}
        node[pos=0.5, right, xshift=-1mm, font=\footnotesize] {\begin{tabular}{l}Th. 2.2\\(ZFC)\end{tabular}}
        (0.05, 1.8);

      \draw[dashed, ->] (0.8, 1.8) -- node[pos=0.5] {$\times$}
        node[pos=0.75, above right, xshift=15mm, yshift=-6mm, font=\footnotesize] {\begin{tabular}{l}Th. 2.3\\(MA)\end{tabular}}
        (0.2, 1.2);

    \draw[dashed, ->] (1.0, 1.2) -- node[pos=0.5] {$\times$} (1.0, 1.8);
    \node[font=\small] at (1.1, 1.5) {Q.1};


    \draw[dashed, ->] (0.8, 2.1) .. controls (0.5, 2.4) .. node[pos=0.5] {$\times$}
        node[pos=0.5, above, yshift=1mm, font=\footnotesize] {\begin{tabular}{c}Pr. 2.1\\(ZFC)\end{tabular}}
        (0.2, 2.1);

    \draw[dashed, ->] (0.8, 0.9) .. controls (0.5, 0.6) .. node[pos=0.5] {$\times$}
        node[pos=0.5, below, yshift=-1mm, font=\footnotesize] {\begin{tabular}{c}Th. 2.3\\(MA)\end{tabular}}
        (0.2, 0.9);


    \draw[dashed, ->] (1, 0.2) -- node[pos=0.5] {$\times$}
        node[pos=0.5, right, xshift=0.1mm, font=\footnotesize] {\begin{tabular}{l}Pr. 4.5\\(ground model)\end{tabular}}
        (1, 0.8);
\end{tikzpicture}}
\end{center}
\caption{Relationship between the
study spaces}
\label{fig:diagram1}
\end{figure}

The correspondence of properties in Figure \ref{fig:diagram1} can be represented through function characteristics (see Figure \ref{fig:diagram2}).

\begin{figure}[H]
\begin{center}
\begin{tikzpicture}[xscale=7.7, yscale=2, >=Stealth]

    \node (TL) at (0,1) {$B_1(X)\text{ is Choquet}$};
    \node (TR) at (1,1) {$B_1(X)\text{ is Baire}$};
    \node (BL) at (0,0) {$B_1(X, [0,1])\text{ is Choquet}$};
    \node (BR) at (1,0) {$B_1(X, [0,1])\text{ is Baire}$};

    \draw[line width=0.8pt, double, double distance=2pt, ->] (0.27,1) -- (0.77,1);

    \draw[line width=0.8pt, double, double distance=2pt, ->] (-0.1,0.8) -- (-0.1,0.2);

    \draw[line width=0.8pt, double, double distance=2pt, ->] (0.3,0) -- (0.77,0);

    \draw[line width=0.8pt, double, double distance=2.5pt, ->] (0.9,0.8) -- (0.9,0.2);


     \draw[dashed, ->] (-0.05, 0.2) -- node[pos=0.5] {$\times$}
        node[pos=0.5, right, xshift=-1mm, font=\footnotesize] {\begin{tabular}{c}Th. 2.2\\(ZFC)\end{tabular}}
        (-0.05, 0.8);

    \draw[dashed, ->] (0.95, 0.2) -- node[pos=0.5] {$\times$} (0.95, 0.8);
    \node[font=\small] at (1.05, 0.5) {Q.1};


    \draw[dashed, ->] (0.8, 1.1) .. controls (0.5, 1.4) .. node[pos=0.5] {$\times$} node[pos=0.5, above, yshift=1mm, font=\footnotesize] {\begin{tabular}{c}Pr. 2.1\\(ZFC)\end{tabular}} (0.2, 1.1);

    \draw[dashed, ->] (0.8, -0.1) .. controls (0.5, -0.4) .. node[pos=0.5] {$\times$} node[pos=0.5, below, yshift=-1mm, font=\footnotesize] {\begin{tabular}{c}Th. 2.3\\(MA)\end{tabular}} (0.2, -0.1);


     \draw[dashed, ->] (0.75, 0.85) -- node[pos=0.5] {$\times$}
        node[pos=0.4, below right, xshift=-1mm, yshift=1mm, font=\footnotesize] {\begin{tabular}{c}Th. 2.3\\(MA)\end{tabular}}
        (0.25, 0.15);

\end{tikzpicture}
\end{center}
\caption{Function relationship between the
study spaces}
\label{fig:diagram2}
\end{figure}



\section{Open questions}

{\bf Question 1.} {\it Is there a (separable metrizable) space with the property $(\kappa_2)$ but without the property $(\kappa)$?}

It is clear that this question is equivalent in a function context to the following: {\it Is there a (separable metrizable) space $X$ such that $B_1(X,[0,1])$ is Baire, but $B_1(X)$ is meager? }

\medskip

{\bf Question 2.} {\it Is there a separable metrizable space $X$ such that $B_1(X,[0,1])$ is Choquet, but $B_1(X)$ is not Choquet?}

Note  that this question is equivalent to the following (Question 4.6 in \cite{RRS}): {\it  Is there, at least consistently, a weak $\lambda$-set that is not a $\lambda$-set?}
\medskip

Under Martin's Axiom there exists a space with the property $(\kappa)$ (and hence with the property $(\kappa_2)$)  which is not weak $\lambda$-set (Theorem \ref{th1}).

\medskip

{\bf Question 3.} {\it Is there in $ZFC$ a space $X$ with the property $(\kappa_2)$ (or $(\kappa)$) which   is not weak $\lambda$-space?}

\medskip

{\bf Question 4.} {\it Is there, at least consistently, a weak $\lambda$-space which without the property $(\kappa)$?}

This question is equivalent in a function context to the following: {\it Is there a space $X$ such that $B_1(X,[0,1])$ is Choquet, but $B_1(X)$ is meager? }


\end{document}